\newcommand{\Z}{\mathbf{Z}}
\newcommand{\Ps}{\mathbf{P}}
\newcommand{\C}{\mathbf{C}}
\newcommand{\cL}{\mathcal{L}}
\newcommand{\cF}{\mathcal{F}}
\newcommand{\cO}{\mathcal{O}}
\newcommand{\cE}{\mathcal{E}}
\newcommand{\cT}{\mathcal{T}}
\newtheorem{theorem}{Theorem}[section]
\newtheorem{lemma}[theorem]{Lemma}
\newtheorem{proposition}[theorem]{Proposition} 
\newtheorem{corollary}[theorem]{Corollary}
\newtheorem{conjecture}[theorem]{Conjecture}
\theoremstyle{definition}
\newtheorem{definition}[theorem]{Definition}
\newtheorem{notation}[theorem]{Notation}
\newtheorem{example}[theorem]{Example}
\theoremstyle{remark} 
\newtheorem{remark}[theorem]{Remark}
\DeclareMathOperator{\coker}{coker}
\DeclareMathOperator{\Pic}{Pic}
\DeclareMathOperator{\Sym}{Sym}
\DeclareMathOperator{\Cliff}{Cliff}
\title{Infinitesimal Torelli for elliptic surfaces revisited}
\author{Remke Kloosterman}
\address{Universit\`a degli Studi di Padova,
Dipartimento di Matematica,
Via Trieste 63,
35121 Padova, Italy}
\thanks{The author would like to thank Marian Aprodu for several discussions on Koszul cohomology of curves and Orsola Tommasi for various comments on a previous version.}
\begin{document}
\begin{abstract} 
In this article we give a new proof for the infinitesimal Torelli theorem for minimal elliptic surfaces without multiple fibers with Euler number at least 24 for nonconstant $j$-invariant. In the case of constant $j$-invariant we find a new proof in the case of Euler number at least 72. We also discuss several new counterexamples.
\end{abstract}
\maketitle

\section{Introduction}\label{secIntro}
Let $\pi:X\to C$ be a minimal elliptic surface without multiple fibers. 
There have been various results as to whether the infinitesimal Torelli property holds for $X$. If the geometric genus $p_g(X)$ vanishes then obviously the infinitesimal Torelli property does not hold. If $X$ is an elliptic K3 surface then it holds by the results on K3 surfaces.

The case $g(C)=0$, $p_g(X)\geq 2$ can be studied by techniques developed by Lieberman--Wilsker--Peters \cite{LPW} and Kii \cite{Kii}. These papers give a sufficient criterion for  infinitesimal Torelli for varieties with divisible canonical bundle. In the latter paper Kii proved infinitesimal Torelli for elliptic surfaces in case $g(C)=0$, $p_g(X)\geq 2$ and the $j$-invariant is nonconstant. In \cite{Ext} the author used a similar method to show that infinitesimal Torelli holds if again $g(C)=0$, $p_g(X)\geq 2$ holds, but the $j$-invariant is constant, and $\pi$ has at least $p_g(X)+3$ singular fibers. It is well known that an elliptic surface with $p_g(X)\geq 1, g(C)=0$ has at least $p_g(X)+2$ singular fibers. In \cite{Ext} it is also shown that elliptic surfaces with $p_g(X)\geq 2, g(C)=0$ and $p_g(X)+2$ singular fibers do not satisfy infinitesimal Torelli.
Chakiris \cite{Cha}, in his proof of the generic Torelli theorem for elliptic surfaces with a section and $g(C)=0$ obtained a generic infinitesimal Torelli result.

One easily checks that Kii's criterion cannot be applied in the case where the genus of $C$ is positive.  M.-H. Saito claimed in \cite{Sai} that infinitesimal Torelli holds for elliptic surfaces without multiple fibers such that   
$p_g(X)\geq 1$ and either the $j$-invariant is nonconstant or the $j$-invariant is constant, different from $0,1728$ and $\chi(\cO_X)\geq 3$ hold, but no constraints on the base curve.
However, Ikeda \cite{IkedaTor} recently obtained an example of an elliptic surface without multiple fibers, with nonconstant $j$-invariant and $p_g(X)=g(C)=1$, for which infinitesimal Torelli does not hold. 

 The original purpose of this paper was to give a new proof for infinitesimal Torelli for elliptic surfaces, by methods different from Saito's.  However, when preparing this paper, we learned that each of the steps in Saito's proof hold  under the additional  assumption that $\Omega^2_X$ is base point free.
 
 Our basic idea is to use Green's version of Kii's criterion \cite[Corollary 4.d.3]{GreenKos}. By doing so, we can reproduce Saito's result under the same hypothesis that $\Omega^2_X$ is base point free. The main difference with Saito's proof is that In the case of nonconstant $j$-invariant, the infinitesimal Torelli result is almost an immediate corollary from the Green-Kii criterion. In the constant $j$-invariant case our method covers many cases left out by Saito. In particular, we obtain results for elliptic surfaces with constant $j$-invariant 0 and 1728, and for elliptic fiber bundles which are not principal.
Moreover, our method yields a series of new counterexamples to infinitesimal Torelli.

To formulate our statement, we need a further invariant of the elliptic fibration $\pi:X\to C$.  Let $\cL$ be the dual of the line bundle $R^1\pi_*\cO_X$ on $C$ and let $d=\deg(\cL)$. It is well known that $d\geq 0$ and $d=0$ corresponds to the case of an elliptic fiber bundle. To apply  Green's version of  Kii's criterion, we need to check that $\Omega^2_X$ is base point free. It is easy to check that this happens  if $d>1$ or $d=1$ and $h^0(\cL)=0$.

 Moreover let $\Delta$ be the reduced effective divisor on $C$ whose support coincides with the support of the discriminant.
In this paper we prove two  results on classes of elliptic surfaces for which infinitesimal Torelli holds, one in the nonconstant $j$-invariant case and one in  the constant $j$-invariant case.
\begin{theorem} Let $\pi: X\to C$ be an elliptic surface without multiple fibers and with nonconstant $j$-invariant. If $d\neq 1$ or $d=1$ and $h^0(\cL)=0$ hold then $X$ satisfies infinitesimal Torelli.
\end{theorem}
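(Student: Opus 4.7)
The plan is to apply Green's Koszul-theoretic refinement of Kii's criterion, \cite[Corollary 4.d.3]{GreenKos}, which reduces infinitesimal Torelli for a smooth projective surface $X$ to two ingredients: (i) base point freeness of $\omega_X$, and (ii) a vanishing of a Koszul cohomology group built from $H^0(\omega_X)$ and $\Omega^1_X$, equivalently the surjectivity of a natural multiplication map on global sections. The proof therefore splits into verifying these two ingredients.

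For (i), the canonical bundle formula $\omega_X \cong \pi^*(\omega_C \otimes \cL)$ holds because $\pi$ has no multiple fibers. Under the hypothesis $d \neq 1$, or $d = 1$ with $h^0(\cL) = 0$, the line bundle $\omega_C \otimes \cL$ on $C$ is generated by its global sections, as already noted in the paragraph preceding the theorem in the introduction. Base point freeness of $\omega_X$ then follows by pulling back.

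For (ii) I would exploit the relative cotangent sequence
\[ 0 \to \pi^*\Omega^1_C \to \Omega^1_X \to \Omega^1_{X/C} \to 0, \]
which lets one split the relevant Koszul multiplication map on $X$ into two pieces after applying $\pi_*$. The piece coming from $\pi^*\Omega^1_C$ reduces to a multiplication of sections on $C$ involving $\omega_C$ and $\omega_C \otimes \cL$, and is handled by standard curve results once the degrees are sufficiently large. The piece coming from $\Omega^1_{X/C}$ is where nonconstancy of $j$ enters: one has $\pi_*\Omega^1_{X/C} \cong \cL^{-1}$ up to a torsion sheaf supported on $\Delta$, and nonconstant $j$-invariant forces the Kodaira-Spencer morphism to be nonzero, which yields the required surjectivity after pushing down.

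The main obstacle is ingredient (ii): one must set up the Leray/Koszul dictionary carefully, control the torsion contributions from the singular fibers, and confirm that nonconstancy of $j$ alone suffices, \emph{without} any additional hypothesis on the number of singular fibers (as would be needed in the constant $j$-invariant case treated later in the paper). Once this dictionary is in place, the nonconstant $j$-invariant case should follow almost immediately from Green--Kii, consistent with the introduction's remark.
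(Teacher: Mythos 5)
Your ingredient (i) matches the paper, and the overall framework (Green--Kii plus pushing the Koszul group down to $C$) is the right one. But your ingredient (ii) contains a genuine gap, and the mechanism you propose for closing it is not the one that works. The decisive fact in the nonconstant $j$-invariant case is that $\pi_*\Omega^1_X \cong \Omega^1_C$ \emph{exactly}: the composite $\pi_*\Omega^1_X \to \pi_*\Omega^1_{X/C}$ is zero, precisely \emph{because} the Kodaira--Spencer class is nonzero. So there is no second piece to make surjective, no torsion contribution to control, and no sheaf $\cL^{-1}$ appearing (your identification of $\pi_*\Omega^1_{X/C}$ has the wrong sign in any case; the relevant target is $\cL(-\Delta)$, and it only enters when $j$ is constant). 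You have inverted the role of nonconstancy of $j$: it does not ``yield the required surjectivity after pushing down,'' it kills the relative piece outright, reducing everything to the single group $K_{p_g-2,1}(C,\Omega^1_C,\Omega^1_C\otimes\cL)$ via Lemma~\ref{lemMor}.

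The second problem is how you propose to handle that remaining curve piece, namely by ``standard curve results once the degrees are sufficiently large.'' The degrees are \emph{not} large in the boundary case $d=1$, $h^0(\cL)=0$, which your theorem statement includes; a degree argument would force you to exclude it. The step that actually closes the proof is Green's duality theorem (Theorem~\ref{dualThm}), which identifies $K_{d+g-3,1}(C,\Omega^1_C,\Omega^1_C\otimes\cL)$ with $K_{0,1}(C,\Omega^1_C\otimes\cL)^*$, the dual of the cokernel of the tautologically surjective map $H^0(\cO_C)\otimes H^0(\Omega^1_C\otimes\cL)\to H^0(\Omega^1_C\otimes\cL)$; this needs only base point freeness of $\Omega^1_C\otimes\cL$, which is exactly what your hypotheses guarantee. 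Without invoking duality on $C$ (and it must be on $C$, not on $X$, where the hypotheses of the duality theorem would force the differential of the period map to vanish anyway), your outline does not close, and in particular does not cover all the cases claimed in the statement.
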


\begin{theorem} Let $\pi: X\to C$ be an elliptic surface without multiple fibers, and with constant $j$-invariant. Suppose that 
\begin{enumerate}
\item if $g=0$ then $d>2$;
\item if $d=1$ then $h^0(\cL)=0$;
\item if $h^1(X)$ is odd then $\cL\not \cong \cO_C$.
\end{enumerate}
Then $X$ satisfies infinitesimal Torelli \emph{if and only if} the multiplication map
\[ \mu_\pi: H^0(\Omega^1_C\otimes \cL)\otimes H^0(\Omega^1_C\otimes \cL^{-1}(\Delta)) \to H^0((\Omega^1_C)^2 (\Delta))\]
is surjective.
\end{theorem}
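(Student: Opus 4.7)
The plan is to apply Green's version of Kii's criterion and then translate the resulting surjectivity condition on $X$ into one on $C$ via the Leray spectral sequence for $\pi$.

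Under hypotheses (1) and (2), the canonical bundle $K_X = \Omega^2_X$ is base point free, so Green's criterion \cite[Cor.~4.d.3]{GreenKos} equates infinitesimal Torelli for $X$ to the surjectivity of the multiplication map
\[
m : H^0(K_X) \otimes H^0(\Omega^1_X) \to H^0(K_X \otimes \Omega^1_X).
\]
Hypothesis (3) is needed to ensure that the equivalence in Green's criterion is not spoiled by a degeneration of the Hodge structure in the elliptic fiber bundle case.

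Since $K_X = \pi^*(\Omega^1_C \otimes \cL)$, the projection formula rewrites $m$ as a multiplication of global sections on $C$:
\[
H^0(\Omega^1_C \otimes \cL) \otimes H^0(\pi_*\Omega^1_X) \to H^0(\Omega^1_C \otimes \cL \otimes \pi_*\Omega^1_X).
\]
The key local computation (and the main technical obstacle) is to show that, for constant $j$-invariant fibrations, the pushforward of the relative cotangent sequence $0 \to \pi^*\Omega^1_C \to \Omega^1_X \to \Omega^1_{X/C} \to 0$ yields an exact sequence
\[
0 \to \Omega^1_C \to \pi_*\Omega^1_X \to \Omega^1_C \otimes \cL^{-1}(\Delta) \to 0
\]
on $C$. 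The shift by the reduced discriminant $\Delta$ reflects the torsion of $\Omega^1_{X/C}$ at singular fibers, and establishing it requires a case-by-case local analysis using the Kodaira classification of singular fibers in the constant $j$-invariant setting.

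Tensoring this sequence by $\Omega^1_C \otimes \cL$ and taking global sections gives a filtration of the target of $m$. A diagram chase then reduces the surjectivity of $m$ to the conjunction of two multiplication surjectivities on $C$: the \emph{base} part
\[
m_1 : H^0(\Omega^1_C \otimes \cL) \otimes H^0(\Omega^1_C) \to H^0((\Omega^1_C)^2 \otimes \cL),
\]
and the \emph{fiber} part, which is precisely $\mu_\pi$. Under the hypotheses (notably $d > 2$ for $g(C)=0$, and positivity of the relevant line bundles for $g(C)\geq 1$), $m_1$ is always surjective by a standard Castelnuovo--Mumford regularity argument. Therefore $m$ is surjective if and only if $\mu_\pi$ is, yielding the claimed equivalence.
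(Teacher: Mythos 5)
Your argument starts from a misquotation of the Kii--Green criterion, and this error propagates through the whole proposal. Green's Corollary 4.d.3 says (for $\Omega^2_X$ base point free) that infinitesimal Torelli is equivalent to the vanishing of the Koszul cohomology group $K_{p_g-2,1}(X,\Omega^1_X,\Omega^2_X)$, i.e.\ exactness in the middle of
\[
H^0(\Omega^1_X)\otimes\wedge^{p_g-1}H^0(\Omega^2_X)\to H^0(\Omega^1_X\otimes\Omega^2_X)\otimes\wedge^{p_g-2}H^0(\Omega^2_X)\to\cdots ,
\]
not to the surjectivity of $m:H^0(K_X)\otimes H^0(\Omega^1_X)\to H^0(K_X\otimes\Omega^1_X)$, which is the vanishing of $K_{0,1}(X,\Omega^1_X,\Omega^2_X)$; the two agree only when $p_g=2$, whereas here $p_g=g+d-1$ is arbitrary. (The underlying infinitesimal Torelli condition is the surjectivity of the cup product $H^0(\Omega^2_X)\otimes H^1(\Omega^1_X)\to H^1(\Omega^1_X\otimes\Omega^2_X)$, which involves $H^1$, not a product of spaces of global sections.) Relatedly, the pushforward sequence you invoke is wrong: for constant $j$-invariant one has $0\to\Omega^1_C\to\pi_*\Omega^1_X\to\cL(-\Delta)\to 0$, with cokernel $\cL(-\Delta)$ and not $\Omega^1_C\otimes\cL^{-1}(\Delta)$ (these have different degrees in general). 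The bundle $\Omega^1_C\otimes\cL^{-1}(\Delta)$ appearing in $\mu_\pi$ is the Serre-dual twist of $\cL(-\Delta)$: it enters only after one pushes the high-index Koszul group down to $C$, runs the long exact Koszul sequence attached to the above short exact sequence, and applies Koszul duality on the curve to convert $K_{d+g-3,1}(C,\cL(-\Delta),\Omega^1_C\otimes\cL)$ into the cokernel of $\mu_\pi$. Your route to $\mu_\pi$ arrives at the right-looking map only because the two errors happen to compensate in the final formula.

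Two further gaps. First, the claim that your base part $m_1:H^0(\Omega^1_C\otimes\cL)\otimes H^0(\Omega^1_C)\to H^0((\Omega^1_C)^2\otimes\cL)$ is always surjective is false: take $C$ hyperelliptic of genus $g\geq 2$ and $\cL$ the degree-two pencil, so that all three bundles are multiples of the $g^1_2$; every section of $\Omega^1_C$ and of $\Omega^1_C\otimes\cL$ is pulled back from $\Ps^1$, so the image of $m_1$ has dimension at most $2g$, while $h^0((\Omega^1_C)^2\otimes\cL)=3g-1$. This configuration is allowed by the hypotheses, so your decomposition would contradict the stated equivalence. What must actually vanish are the flanking terms $K_{d+g-3,1}(C,\Omega^1_C,\Omega^1_C\otimes\cL)$ and $K_{d+g-4,2}(C,\Omega^1_C,\Omega^1_C\otimes\cL)$ of the long exact Koszul sequence, and these are killed by duality (they are dual to $K_{0,1}$ and $K_{1,0}$ of $\Omega^1_C\otimes\cL$), not by a regularity argument for $K_{0,1}$-type multiplication maps. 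Second, hypothesis (3) concerns the elliptic fiber bundle case $d=0$, which the Koszul argument does not cover at all: there one must work directly with the cup product via the Leray spectral sequence and show that the sequence $0\to\Omega^1_C\otimes\cL^{-1}\to R^1\pi_*\Omega^1_X\to\cO_C\to 0$ splits on global sections, the failure of which when $h^1(X)$ is odd and $\cL\cong\cO_C$ is precisely the obstruction excluded by (3); your one-sentence gloss does not engage with this.
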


We will comment a bit one the cases excluded.
If $g=0$ and $d\leq 2$ then, depending on $d$, we have product surfaces $E\times \Ps^1$ $(d=0)$, rational elliptic surfaces  $(d=1)$ or elliptic K3 surfaces $(d=2)$. For each of these cases it is well known whether infinitesimal Torelli does hold (K3) or does not hold (products and rational surfaces). If $h^1(X)$ is odd and $\cL\cong \cO_C$ then it is known that $X$ does not satisfy infinitesimal Torelli \cite[Section 8]{Sai} and we will come back to this in Section~\ref{secFiber}. Hence the only cases not covered by the above two theorems are the cases $d=1$ and $h^0(\cL)>0$, i.e, when $\Omega^2_X$ is not base point free.

The second theorem does not give a conclusive answer whether infinitesimal Torelli holds, but the map $\mu_\pi$ is studied extensively in the literature. 
For many cases we know that $\mu_\pi$ is surjective, which yields to the following corollary. Recall that if $d\geq 1$ then $s\geq d+1$.

\begin{corollary} Let $\pi: X\to C$ be an elliptic surface without multiple fibers and with constant $j$-invariant. Let $s$ be the number of singular fibers. Suppose that if $d=1$ then $h^0(\cL)=0$ holds. Moreover, Suppose that one of the following holds
\begin{enumerate}
\item $d\geq 6$;
\item $3\leq d\leq 5$ and $s\geq d+2$;
\item $d\in \{1,2\}$ and $s\geq d+3$;
\item  $d\in \{4,5\}$, $s= d+1$; $h^0(\cL^{-1}(\Delta))=0$ and $\Cliff(C)\geq 2$;
\item $d\in \{1,2,3\}$, $s= d+1$ and $h^0(\cL^{-1}(\Delta))=0$, either one of $\Omega^1_C\otimes \cL$ or $\Omega^1_C\otimes \cL(-\Delta)$ is very ample and  $\Cliff(C)\geq 2$;
\item  $d\in \{1,2\}$, $s=d+2$, $h^0(\cL^{-2}(\Delta))=0$;
\item $d=0$; $h^1(X)$ is even; $\cL\cong \cO_C$ and $C$ is not hyperelliptic.
\end{enumerate}
Then $X$ satisfies infinitesimal Torelli.
\end{corollary}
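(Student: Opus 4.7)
The plan is to apply Theorem 2 in each of the seven cases and thereby reduce to surjectivity of $\mu_\pi: H^0(A)\otimes H^0(B)\to H^0(A\otimes B)$, where I write $A:=\Omega^1_C\otimes\cL$ and $B:=\Omega^1_C\otimes\cL^{-1}(\Delta)$; these have respective degrees $2g-2+d$ and $2g-2-d+s$, and the target has degree $4g-4+s$. Using $s\geq d+1$ when $d\geq 1$, one has $\deg B\geq 2g-1$ throughout cases (1)--(6). First I would verify the hypotheses of Theorem 2: conditions (1) and (2) there follow from the corollary's assumptions together with $s\geq d+1$, while condition (3), which requires $\cL\not\cong\cO_C$ when $h^1(X)$ is odd, is vacuous in cases (1)--(6) (since $\deg\cL=d\neq 0$) and is precisely part of case (7). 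The rest of the work consists of checking, case by case, that $\mu_\pi$ is surjective by invoking known results on multiplication maps of line bundles on curves.

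For the high-degree cases (1)--(3) one has $\deg A\geq 2g-1$ and $\deg B\geq 2g-1$ with some slack, and either $A$ or $B$ is very ample while the other is globally generated and nonspecial. A classical theorem of Mumford---equivalently, a standard Castelnuovo--Mumford regularity argument via the base-point-free pencil trick---then gives surjectivity. The borderline cases (4)--(5) are more delicate since the degrees drop to near $2g-1$ and Mumford's criterion no longer suffices. Here I would appeal to Green's $H^0$-lemma together with the sharpenings of Green--Lazarsfeld and Aprodu on syzygies of line bundles: under the assumed very ampleness of one of the factors and $\Cliff(C)\geq 2$, combined with the vanishing $h^0(\cL^{-1}(\Delta))=0$, these results yield the desired surjectivity.

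Case (6) follows along similar lines, the extra vanishing $h^0(\cL^{-2}(\Delta))=0$ being used to control the kernel of $\mu_\pi$ via the base-point-free pencil trick. Case (7), the elliptic-bundle case $d=0$ with $\cL\cong\cO_C$, reduces $\mu_\pi$ to the map $H^0(\Omega^1_C)\otimes H^0(\Omega^1_C(\Delta))\to H^0((\Omega^1_C)^2(\Delta))$, whose surjectivity for non-hyperelliptic $C$ follows from Max Noether's theorem together with a routine argument handling the twist by the effective divisor $\Delta$. I expect the main obstacle to lie in cases (4)--(5), where the subtle interplay between very ampleness, the Clifford index, and the auxiliary vanishings must be handled carefully; the author's acknowledgement of discussions with Aprodu suggests that these cases rest on Aprodu's Koszul-cohomology techniques.
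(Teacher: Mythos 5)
Your overall architecture coincides with the paper's: reduce to surjectivity of $\mu_\pi$ via the reduction theorem (Theorem~\ref{prpRedKos}, plus the separate fiber-bundle analysis of Section~\ref{secFiber} for $d=0$), then verify surjectivity case by case using Green's $H^0$-lemma, Butler/Green--Lazarsfeld-type results for $s=d+1$, and Noether's theorem for case (7). But your treatment of case (1) has a genuine gap. For $d\geq 6$ the corollary imposes no hypothesis on $s$, and you only invoke $s\geq d+1$; at $s=d+1$ the bundle $B=\Omega^1_C\otimes\cL^{-1}(\Delta)$ has degree exactly $2g-1$ with $\deg(\cL^{-1}(\Delta))=1$, and whenever $h^0(\cL^{-1}(\Delta))>0$ it acquires a base point and $\mu_\pi$ is \emph{not} surjective --- this is precisely the paper's source of counterexamples (Lemma~\ref{lemKosNonVanA} and the $d=5$, $s=6$ example). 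So ``$\deg B\geq 2g-1$ with some slack'' is unjustified and your argument for (1) would fail. The missing ingredient is Proposition~\ref{propBasic}(6): constant $j$-invariant forces $s\geq\tfrac{6}{5}d$, hence $s\geq d+2$ for every $d\geq 6$, which folds case (1) into case (2). Without that arithmetic input, case (1) does not follow.

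A second, smaller but real problem: the tools you name for cases (1)--(3) are too coarse at the boundary. In case (2) with $s=d+2$ one has $\deg B=2g$ exactly, and in case (3) with $d=1$ one has $\deg A=2g-1$; Mumford's degree criterion does not cover these, and the base-point-free pencil trick would require $h^1(\cL^{-2}(\Delta))=0$, which fails (e.g.\ $d=3$, $s=5$ gives $h^1(\cL^{-2}(\Delta))=g$). The paper's Lemma~\ref{lemKosVanA} instead applies Green's $H^0$-lemma with the full system $|\Omega^1_C\otimes\cL|$, and the needed inequality $h^1(\cL^{-2}(\Delta))\leq g+d-3$ holds with \emph{equality} in the extremal subcases; the hypothesis $h^0(\cL^{-2}(\Delta))=0$ of case (6) enters exactly there (to bound this $h^1$, not to ``control the kernel''). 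Two further points: in case (3) with $g=0$, $d=2$ the reduction theorem does not apply (it requires $d>2$ when $g=0$) and one must quote the K3 result separately, as in Theorem~\ref{mainThmCst}; and in case (7) the fibration is a fiber bundle, so $\Delta=0$ and the map is literally Noether's $H^0(\Omega^1_C)^{\otimes 2}\to H^0((\Omega^1_C)^{\otimes 2})$ --- there is no twist to handle, though the reduction to $\mu_\pi$ there goes through Saito's Leray-spectral-sequence argument rather than the Koszul duality of Section~\ref{secCst}.
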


In some cases one can show that $\mu_\pi$ is not surjective.
\begin{theorem} Suppose $\pi:X\to C$ is an elliptic surface, with constant $j$-invariant and $d+1$ singular fibers. If $g=0$ then suppose additionally that $d\geq 3$ and if $d=1$ then suppose additionally that $h^0(\cL)=0$.  If $h^0(\cL^{-1}(\Delta))>0$ then $X$ does not satisfy infinitesimal Torelli
\end{theorem}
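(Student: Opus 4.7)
The plan is to apply Theorem 2 and exhibit the failure of surjectivity for the multiplication map $\mu_\pi$. The key input is a nonzero section $\sigma\in H^0(\cL^{-1}(\Delta))$, which exists by hypothesis. Since $\deg\cL^{-1}(\Delta)=s-d=1$, this section has a simple zero at a single point $p\in C$, giving rise to two short exact sequences
\[
0\to\cO_C\xrightarrow{\sigma}\cL^{-1}(\Delta)\to\cO_p\to 0,\qquad
0\to\cL\xrightarrow{\sigma}\cO_C(\Delta)\to\cO_p\to 0.
\]
The numerical hypotheses needed to invoke Theorem 2 are all satisfied: in particular $d\geq 1$ automatically, since $d=0$ would force the fibration to have no singular fibers, contradicting $s=d+1\geq 1$.

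In the case $g=0$ the statement is immediate: $\Omega^1_C\otimes\cL^{-1}(\Delta)\cong\cO_{\Ps^1}(-1)$ has no sections, so $\mu_\pi=0$, yet $H^0((\Omega^1_C)^2(\Delta))=H^0(\cO_{\Ps^1}(d-3))$ is nonzero because $d\geq 3$.

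For $g\geq 1$ I would use $\sigma$ to factor $\mu_\pi$ through a proper subspace. Tensoring the first sequence with $\Omega^1_C$ and using the vanishing $H^1(\Omega^1_C\otimes\cL^{-1}(\Delta))\cong H^0(\cL(-\Delta))^*=0$ (the degree is $-1$), the long exact sequence together with $h^1(\Omega^1_C)=1$ forces multiplication by $\sigma$ to induce an isomorphism $H^0(\Omega^1_C)\xrightarrow{\sim} H^0(\Omega^1_C\otimes\cL^{-1}(\Delta))$. Tensoring the second sequence with $(\Omega^1_C)^2$ and using the vanishing $H^1((\Omega^1_C)^2\otimes\cL)\cong H^0((\Omega^1_C)^{-1}\otimes\cL^{-1})^*=0$ (whose dual has degree $2-2g-d<0$, thanks to $d\geq 1$), the inclusion $H^0((\Omega^1_C)^2\otimes\cL)\hookrightarrow H^0((\Omega^1_C)^2(\Delta))$ has image of codimension exactly one.

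To conclude, for any $r\in H^0(\Omega^1_C\otimes\cL)$ and $s\in H^0(\Omega^1_C\otimes\cL^{-1}(\Delta))$ the first isomorphism writes $s=\sigma\cdot t$ with $t\in H^0(\Omega^1_C)$, whence $\mu_\pi(r\otimes s)=\sigma\cdot(r\cdot t)$ lies in the codimension-one subspace identified above. Hence $\mu_\pi$ is not surjective, and Theorem 2 delivers the failure of infinitesimal Torelli. The essential observation---and the only really nontrivial step---is that $\sigma$ can be ``factored out'' from the second tensor factor of $\mu_\pi$; this is ultimately a compatibility between the two $H^1$-vanishings above, mediated by the fact that $h^1(\Omega^1_C)=1$ exactly matches the length of the skyscraper cokernel of $\sigma$.
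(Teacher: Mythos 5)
Your proof is correct and takes essentially the same route as the paper: reduce via the criterion identifying infinitesimal Torelli with surjectivity of $\mu_\pi$, then note that since $\cL^{-1}(\Delta)\cong\cO_C(p)$ the point $p$ is a base point of $\Omega^1_C\otimes\cL^{-1}(\Delta)$, so the image of $\mu_\pi$ lies in the codimension-one subspace of sections of $(\Omega^1_C)^2(\Delta)$ vanishing at $p$. Your two short exact sequences merely make explicit the two dimension counts the paper asserts (that every section of $\Omega^1_C\otimes\cL^{-1}(\Delta)$ is divisible by $\sigma$, and that the subspace of sections vanishing at $p$ is proper), so no further comment is needed.
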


If $g=0$, $d\geq 2$ and $s=d+1$ then  $h^0(\cL^{-1}(\Delta))=2$ for degree reasons. Hence $X$ does not satisfy infinitesimal Torelli in this case. In this way we recover the counterexamples from \cite{Ext}. However, the result in that paper is much stronger. Namely, there we proved for $d\geq 3$ that the period map is constant on the locus of elliptic surfaces with constant $j$-invariant and $d+1$ singular fibers. However, the above result yields new counterexamples if the base curve has genus 1:

\begin{theorem} Suppose $\pi:X\to C$ is an elliptic surface, with $g(C)=1$, $d\in \{1,2\}$ and constant $j$-invariant different from $0,1728$. Then $X$ does not satisfy infinitesimal Torelli.
\end{theorem}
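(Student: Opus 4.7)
The plan is to split on the value of $d$.

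For $d = 2$, the hypotheses of Theorem~1.2 are satisfied: (1) and (2) are vacuous, and (3) holds since $\deg \cL = 2$ gives $\cL \not\cong \cO_C$. So infinitesimal Torelli is equivalent to surjectivity of
\[
\mu_\pi : H^0(\cL) \otimes H^0(\cL^{-1}(\Delta)) \to H^0(\cO_C(\Delta))
\]
(using $\Omega^1_C = \cO_C$). Both source and target have dimension $4$. The base-point-free pencil trick applied to $|\cL|$ (a bpf pencil since $\deg \cL = 2$ on the elliptic curve $C$) identifies $\ker \mu_\pi$ with $H^0(\cL^{-2}(\Delta))$. Because constant $j$-invariant elliptic surfaces satisfy $\cL^{\otimes 2} \cong \cO_C(\Delta)$ (the bundle $\cL$ being precisely the one that defines the monodromy double cover branched along $\Delta$), one has $\cL^{-2}(\Delta) \cong \cO_C$ and $\ker \mu_\pi$ is one-dimensional. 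The image is therefore $3$-dimensional, so $\mu_\pi$ is not surjective and by Theorem~1.2 infinitesimal Torelli fails.

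For $d = 1$, Theorem~1.2 is unavailable ($h^0(\cL) = 1$ violates condition~(2) and $\Omega^2_X$ is not base-point free), and Theorem~1.4 suffers from the same obstruction. I plan to argue directly through the explicit bielliptic model. Write $X$ as the minimal resolution of $(E \times \tilde C)/G$, where $E$ has the prescribed constant $j$-invariant, $f : \tilde C \to C$ is the double cover with $f_*\cO_{\tilde C} = \cO_C \oplus \cL^{-1}$ (branched along $\Delta$), and $G = \Z/2$ acts by $(-1_E, \sigma)$ with $\sigma$ the covering involution. Riemann--Hurwitz gives $g(\tilde C) = 2$, so the Prym variety $\operatorname{Prym}(\tilde C/C)$ is an elliptic curve. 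Decomposing $H^2(X)$ under the $G$-action and including the eight exceptional $(-2)$-classes coming from the $A_1$-singularities of the quotient, every summand is purely of type $(1,1)$ except for the transcendental piece $H^1(E) \otimes H^1(\tilde C)^- \cong H^1(E) \otimes H^1(\operatorname{Prym})$. Hence, since $E$ is fixed, the Hodge structure on $H^2(X)$ depends only on the isomorphism class of $\operatorname{Prym}$.

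To exhibit a nonzero kernel element of the infinitesimal period map I would use that the moduli space of triples $(C, \Delta, \cL)$ with $g(C)=1$, $\Delta$ reduced of degree $2$ and $\cL^{\otimes 2}\cong \cO_C(\Delta)$, equivalently of bielliptic double covers $\tilde C \to C$ of elliptic curves branched at two points, is two-dimensional, while the Prym map to the $1$-dimensional $j$-line of elliptic Pryms is surjective. Its fibers are therefore positive-dimensional, and a tangent vector along such a fiber lifts to a nonzero element of $H^1(X, T_X)$ preserving $\operatorname{Prym}$, hence the entire Hodge structure on $H^2(X)$. This yields a nonzero class in the kernel of the infinitesimal period map, so infinitesimal Torelli fails.

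The main obstacle is the $d = 1$ case: one must verify that the Prym-preserving tangent on the moduli of triples really does lift to a nonzero class in $H^1(X, T_X)$, by relating $G$-equivariant deformations of $E \times \tilde C$ to deformations of $X$ through the resolution of its eight $A_1$-singularities, and checking that the resulting class is not cancelled by identifications coming from automorphisms. An alternative route, more in line with the paper's methods, would be to extend Theorem~1.4 by weakening its base-point-freeness assumption on $\Omega^2_X$ and then applying it in the present setting where $h^0(\cL^{-1}(\Delta))>0$ is automatic.
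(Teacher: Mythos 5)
Your treatment of the case $d=2$ is correct and follows essentially the paper's own route: the hypotheses of Theorem~\ref{prpRedKos} are met ($d=2$, $g>0$), and for constant $j\neq 0,1728$ the reduced discriminant satisfies $\cL^{2}\cong\cO_C(\Delta)$, so $\mu_\pi$ becomes the multiplication $H^0(\cL)\otimes H^0(\cL)\to H^0(\cL^2)$ from a $4$-dimensional source to a $4$-dimensional target. The paper (Lemma~\ref{lemKosNonVanB}) rules out surjectivity by factoring through the $3$-dimensional $\Sym^2H^0(\cL)$; your base-point-free pencil trick computation of the kernel is an equally valid variant of the same step. Only your parenthetical claim that \emph{every} constant-$j$ surface satisfies $\cL^2\cong\cO_C(\Delta)$ is too strong --- it fails for $j=0,1728$ --- but those values are excluded by hypothesis, so nothing is lost.

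The genuine gap is in the case $d=1$, and you have diagnosed the obstruction correctly: for $g=d=1$ Riemann--Roch forces $h^0(\cL)=1$, so $\Omega^2_X$ has a base curve and neither Theorem~\ref{prpRedKos} nor Theorem~\ref{ThmCounter} applies; this is precisely the situation of Section~\ref{secConj}, where the paper states that non-surjectivity of $\mu_\pi$ is ``insufficient to determine the failure of infinitesimal Torelli'' and offers only a conjecture. Your substitute argument via the bielliptic model $(E\times\tilde C)/(\Z/2)$ is in the spirit of \cite{Ext}, and the Hodge-theoretic bookkeeping is right: $b_2=14$, ten algebraic classes (two from $H^0\otimes H^2$ and $H^2\otimes H^0$, eight exceptional $(-2)$-curves), and a $4$-dimensional piece $H^1(E)\otimes H^1(\tilde C)^-$ depending only on $E$ and the Prym elliptic curve, while the data $(C,\Delta,\cL)$ move in a $2$-dimensional family dominating the $1$-dimensional $j$-line of Pryms. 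But the decisive step is exactly the one you defer: that the Prym-constant tangent direction has nonzero Kodaira--Spencer image in $H^1(X,T_X)$, and that this holds at \emph{every} member of the family rather than at a generic one, since the theorem is asserted for all such $X$. Without that, you have only shown that the period map is non-injective along a positive-dimensional family, not that its differential has a kernel at each point. So the proposal settles $d=2$ but leaves $d=1$ open; it is worth recording that, as far as the body of the paper goes, the $d=1$ case of the stated theorem appears to rest on the same unestablished ground.
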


There are a few cases not covered by our results. For a certain number of classes of elliptic surfaces with $d\leq 5$ and constant $j$-invariant we do not know whether  $\mu_\pi$ is surjective or not. This is an extensively researched problem and we do not aim to elaborate on this.

A second class of surfaces  excluded are the surfaces with $d=1$ and $h^0(\cL)>0$, i.e., the  case where $\Omega^2_X$ has a one-dimensional base locus and our method breaks down. We expect that infinitesimal Torelli does not hold in this case and we will present evidence for this. Note that if moreover the $j$-invariant is constant then $\mu_\pi$ is not surjective in this case.
Also, Ikeda's counterexample is of this type.

Our strategy is to use Green's version of Kii's criterion. If $\Omega^n_X$ is base point free then this criterion reduces infinitesimal Torelli to a problem on the vanishing of a certain Koszul cohomology group. In the case of elliptic surfaces we can relate this Koszul cohomology group with a Koszul cohomology group on the base curve. If the $j$-invariant is nonconstant then it is easy to prove that this group vanishes, whereas if the $j$-invariant is constant then this group vanishes if and only if $\mu_\pi$ is surjective. This strategy leaves out the cases where $\Omega^2_X$ has a base locus, i.e.,  the case where $d=1, h^0(\cL)>0$; the case $g=d=0$ and some particular cases ($K3$ surfaces, nonalgebraic principal elliptic fiber bundles), because of some technicalities in the proof.

The paper is organised as follows. 
In Section~\ref{secPrelim} we recall some preliminaries on elliptic surfaces and on Koszul cohomology. In Section~\ref{secNcst} we prove the Torelli result for elliptic fibrations with nonconstant $j$-invariant. In Section~\ref{secCst} we prove the results for constant $j$-invariant such that $d>0$. 
 In Section~\ref{secFiber} we discuss the case $d=0$. Finally, in Section~\ref{secConj} we discuss what happens if $d=1$ and $h^0(\cL)>0$ hold.

\section{Preliminaries on elliptic surfaces and on Koszul cohomology}\label{secPrelim}
\begin{notation}
Let $\pi:X\to C$ be an elliptic surface without multiple fibers, but possibly without a section. Let $\cL$ be the dual of the line bundle $R^1\pi_*\cO_X$. (This is a line bundle, see \cite[(II.3.5)]{MiES} for the case of an elliptic fibration with a section or \cite{Sai} for the case of fibrations without multiple fibers.)
Let $d=\deg(\cL)$. 

If $\pi:X\to C$ is an elliptic fibration, let $S=\{P_1,\dots,P_s\}$ be the set of points of $C$ such that $\pi^{-1}(P_i)$ is singular. Let $\Delta=\sum_{P\in S} P$. Let $s=\deg(\Delta)$ be the number of singular fibers.  With $j(\pi):C\to \Ps^1$ we denote the morphism such that if $P\not \in S$ then $j(\pi)(P)$ is the $j$-invariant of $\pi^{-1}(P)$.
\end{notation}

We recall the following well known results, proofs of which can be found in \cite{MiES} in the case of (projective) elliptic surfaces with a section and in \cite{Sai} in the case of (complex analytic) elliptic surfaces without multiple fibers.

\begin{proposition}\label{propBasic}Let $\pi:X\to C$ be an elliptic surface without multiple fibers. Then the following properties hold
\begin{enumerate}
\item $d\geq 0$.
\item $d=0$ holds  if and only if $\pi$ is a fiber bundle.
\item $\Omega^2_X=\pi^*(\Omega^1_C\otimes \cL)$. In particular, if $\cL\not \cong \cO_C$ then $p_g(X)=g+d-1$.
\item If $j(\pi)$ is not constant then $\pi_*\Omega^1_X=\Omega^1_C$.
\item \label{exactseq} If $j(\pi)$ is constant then there is an exact sequence
\[ 0 \to \Omega^1_C\to \pi_*\Omega^1_X \to \cL(-\Delta)\to 0.\]
\item If $j(\pi)$ is constant then $s\geq \frac{6}{5}d$.
\end{enumerate}

\end{proposition}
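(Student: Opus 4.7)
The plan is to dispatch the six parts of Proposition~\ref{propBasic} in a natural order starting from the canonical bundle formula (part (3)), which is both the computational heart and the input needed for the numerical statements. Since $\pi$ is flat and proper with one-dimensional fibers of arithmetic genus~$1$, relative Grothendieck--Serre duality gives $\omega_{X/C}\cong (R^1\pi_*\cO_X)^\vee = \cL$, so $\Omega^2_X = \omega_X = \omega_{X/C}\otimes \pi^*\omega_C = \pi^*(\Omega^1_C\otimes\cL)$. The projection formula together with $\pi_*\cO_X=\cO_C$ then yields $p_g(X)=h^0(\Omega^1_C\otimes\cL)$. Applying Riemann--Roch on $C$ and Serre duality gives $p_g(X) = g+d-1+h^0(\cL^{-1})$, and under the hypothesis $\cL\not\cong\cO_C$ the last term vanishes once $d\ge 0$ is known: if $d>0$ the bundle $\cL^{-1}$ has negative degree, and if $d=0$ it is a nontrivial degree-zero line bundle.

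For (1) and (2), the Leray spectral sequence for $\pi$ (using $\pi_*\cO_X=\cO_C$ and $R^1\pi_*\cO_X=\cL^{-1}$) gives $\chi(\cO_X) = \chi(\cO_C) - \chi(\cL^{-1}) = d$. Both statements then follow from the classical analytic result (Kodaira, and in the setting of fibrations without multiple fibers \cite{Sai}) that $\chi(\cO_X)\geq 0$ for an elliptic surface, with equality exactly in the fiber bundle case. For (4) and (5), I would start from the conormal sequence $\pi^*\Omega^1_C\to\Omega^1_X\to\Omega^1_{X/C}\to 0$, push it forward to $C$, and use that over the locus of smooth fibers $\pi_*\Omega^1_{X/C}$ is a line bundle identified with $\cL$ via the identification of $\Omega^1_E$ with the dual of $H^1(E,\cO_E)$ on each fiber. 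A local analysis at each point of $S$ computes the contribution of singular fibers: in the nonconstant $j$-invariant case the image of $\pi^*\Omega^1_C\to\pi_*\Omega^1_X$ is already saturated and fills out $\Omega^1_C$ (the torsion coming from $\Omega^1_{X/C}$ cancels in the pushforward), whereas in the constant $j$-invariant case the quotient is precisely $\cL(-\Delta)$, where the twist by $-\Delta$ records the vanishing of the generating relative differential at each singular fiber.

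For (6), I would combine Noether's formula with $K_X^2=0$ (minimal elliptic surface) to get $e(X)=12\chi(\cO_X)=12d$, and then use $e(X)=\sum_{P\in S} e(F_P)$ since smooth elliptic fibers have Euler number zero. When $j(\pi)$ is constant, every singular fiber is of additive Kodaira type drawn from $\{I_0^*, II, II^*, III, III^*, IV, IV^*\}$, whose Euler numbers are $6,2,10,3,9,4,8$, bounded above by $10$. Hence $12d\le 10s$, i.e., $s\geq 6d/5$. The main obstacle is not any single computation but ensuring everything is valid in the possibly non-projective analytic setting allowed by \cite{Sai}: I would handle this by citing the analytic versions of the canonical bundle formula and of Grothendieck duality, and by performing the local analysis at singular fibers using analytic local models of each Kodaira type.
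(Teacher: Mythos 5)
The paper does not actually prove Proposition~\ref{propBasic}: it is stated as a recollection of well-known facts, with proofs deferred to \cite{MiES} (Jacobian case) and \cite{Sai} (analytic case without multiple fibers). Your sketch reproduces the standard arguments from those sources, and the overall structure --- canonical bundle formula first, then $\chi(\cO_X)=d$ via Leray for (1)--(2), pushforward of the conormal sequence with a local analysis at singular fibers for (4)--(5), and Noether's formula plus the Euler numbers of additive Kodaira types for (6) --- is the right one. The Riemann--Roch computation $p_g=g+d-1+h^0(\cL^{-1})$ and the bound $12d\le 10s$ (maximal Euler number $10$ for $II^*$ among the additive types occurring with constant finite $j$) are correct.

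One step is misattributed in a way that matters. You write that relative Grothendieck--Serre duality gives $\omega_{X/C}\cong(R^1\pi_*\cO_X)^\vee=\cL$; as written this equates a sheaf on $X$ with a sheaf on $C$, and even read charitably as $\omega_{X/C}\cong\pi^*\cL$ it does not follow from duality alone. Duality yields $\pi_*\omega_{X/C}\cong(R^1\pi_*\cO_X)^\vee=\cL$; the assertion that $\omega_{X/C}$ descends to the base, i.e.\ that the evaluation map $\pi^*\pi_*\omega_{X/C}\to\omega_{X/C}$ is an isomorphism, is Kodaira's canonical bundle formula and uses relative minimality and the absence of multiple fibers (one checks $\omega_{X/C}|_F\cong\cO_F$ for every fiber via Zariski's lemma, then applies base change). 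You should either cite the canonical bundle formula directly or supply this intermediate step. Relatedly, in (4)--(5) the statement you actually need to verify in the local models is that the invariant relative differential, as a section of $\pi_*\Omega^1_X/\Omega^1_C$, vanishes to order exactly one at each additive fiber --- this is what produces the \emph{reduced} divisor $\Delta$ in $\cL(-\Delta)$ rather than some larger twist; your plan points at this but does not carry it out. With those two points filled in, the argument is complete and is essentially the proof the cited references give.
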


\begin{corollary}\label{corBpf} Let $\pi:X\to C$ be an elliptic surface without multiple fibers and $d\geq 1$. Then $\Omega^2_X$ is base point free if and only if either $d\geq 2$ or
$d= 1$ and $h^0(\cL)=0$
holds.
\end{corollary}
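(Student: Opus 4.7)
The plan is to invoke part (3) of Proposition~\ref{propBasic}, which identifies $\Omega^2_X \cong \pi^*(\Omega^1_C\otimes \cL)$. Since $\pi$ is a proper surjective morphism with connected fibres, so that $\pi_*\cO_X = \cO_C$ and the pullback identifies $H^0(\Omega^2_X)$ with $H^0(\Omega^1_C\otimes \cL)$, a point of $X$ lies in the base locus of $\Omega^2_X$ if and only if its image in $C$ lies in the base locus of $M = \Omega^1_C\otimes \cL$. Hence the whole question reduces to asking when $M$ is base point free on $C$.

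For a fixed $P \in C$, the short exact sequence $0 \to M(-P) \to M \to M|_P \to 0$ shows that $M$ fails to be base point free at $P$ exactly when $h^1(M(-P)) > h^1(M)$. Serre duality converts these into $h^0(\cL^{-1}(P))$ and $h^0(\cL^{-1})$ respectively. Since $d = \deg\cL \geq 1$, the bundle $\cL^{-1}$ has negative degree, so $h^0(\cL^{-1}) = 0$, and the base-point-freeness of $M$ at $P$ reduces to the single condition $h^0(\cL^{-1}(P)) = 0$.

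Now I would split on $d$. If $d \geq 2$, then $\cL^{-1}(P)$ still has negative degree for every $P$, and the vanishing holds automatically; so $M$ is base point free without any further hypothesis. If $d = 1$, then $\cL^{-1}(P)$ has degree zero and $h^0(\cL^{-1}(P)) \neq 0$ if and only if $\cL^{-1}(P)\cong \cO_C$, i.e.\ $\cL \cong \cO_C(P)$. Such a $P$ exists if and only if $h^0(\cL) > 0$, since any nonzero section of a degree-one line bundle vanishes at a single point $P$ and realises $\cL \cong \cO_C(P)$. Together, these two cases give precisely the dichotomy in the statement.

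I do not expect any genuine obstacle: the argument is essentially a Riemann--Roch / Serre duality computation on $C$. The only mild verifications are that $\pi^*$ transports base point freeness faithfully (which follows from $\pi_*\cO_X = \cO_C$) and that a degree-one line bundle on $C$ admits a nonzero section if and only if it is of the form $\cO_C(P)$; both are standard.
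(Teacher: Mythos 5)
Your proposal is correct and follows essentially the same route as the paper: both reduce via $\Omega^2_X=\pi^*(\Omega^1_C\otimes\cL)$ to base point freeness of $\Omega^1_C\otimes\cL$ on $C$ and then use Serre duality to translate the base-point condition into $h^0(\cL^{-1}(P))\neq 0$, i.e.\ effectivity of $\cL$ when $d=1$. The only cosmetic difference is that the paper dispatches the case $d\geq 2$ directly by the degree bound $\deg(\Omega^1_C\otimes\cL)\geq 2g$, whereas you run the same duality computation uniformly in both cases.
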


\begin{proof} From $ \Omega^2_X=\pi^*(\Omega^1_C\otimes \cL)$ we obtain that $\Omega^2_X$ is base point free if and only if $\Omega^1_C\otimes \cL$ is base point free on $C$.

If $d\geq 2$ then $\deg(\Omega^1_C\otimes \cL)=2g-2+d\geq 2g$. Hence $\Omega^1_C\otimes\cL$ is  base point free for degree reasons.

Suppose now that $d=1$. 
The line bundle  $\Omega^1_C\otimes \cL$ has a base point if and only if there is a point $p\in C$ such that $h^0(\Omega^1_C \otimes \cL(-p))=h^0(\Omega^1_C \otimes \cL)$.
The right hand side equals $g$. The left hand side equals $h^1(\cL^{-1}(p))$. Since $\cL^{-1}(p)$ has degree 0, we have that $h^1(\cL^{-1}(p))=g$  if and only if $\cL^{-1}(p)\cong \cO_C$. This happens if and only if  $\cL$ is effective, i.e., if and only if $h^0(\cL)>0$. \end{proof}
\begin{remark}\label{remBps} If $d=1$, $g>0$  and $\Omega^2_X$ is not base point free then the base locus has codimension 1.
Moreover, if  $d=1$ and $h^0(\cL)=0$ then $g\geq 2$. 
\end{remark}

We now define Koszul cohomology groups.
\begin{definition}
Let $Y$ be a compact complex manifold.  Let $\cF$ be a coherent analytic sheaf on $Y$ and let $\cL$ be an analytic line bundle on $Y$. Then for any pair of integers $(p,q)$ we define the Koszul cohomology group $K_{p,q}(Y,\cF,\cL)$ as the cohomology of
\begin{small}
\[ H^0(\cF\otimes \cL^{ (q-1)}) \otimes \wedge^{p+1} H^0(\cL) \to H^0(\cF\otimes \cL^{ q}) \otimes  \wedge^p H^0(\cL) \to H^0(\cF\otimes \cL^{ (q+1)}) \otimes \wedge^{p-1} H^0(\cL).\]
\end{small}
If $\cF=\cO_Y$ then one write $K_{p,q}(Y,\cL)$ for $K_{p,q}(Y,\cO_Y,\cL)$.
\end{definition}

There is an obvious isomorphism $K_{p,q}(Y,\cF,\cL)\cong K_{p,q-1}(Y,\cF\otimes \cL,\cL)$. We will use this identification several times in our proofs.

\begin{lemma}\label{lemMor}
 Let $\pi:X\to Y$ be a morphism, $\cF$ a coherent sheaf on $X$, $\cL$ a line bundle on $Y$. Then for every $p,q$ we have that
\[ K_{p,q}(X,\cF,\pi^*\cL)\cong K_{p,q}(Y,\pi_*\cF,\cL).\]
\end{lemma}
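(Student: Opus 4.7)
My strategy would be to identify the two Koszul complexes term-by-term via the projection formula, then verify that these identifications commute with the Koszul differentials. The whole argument is essentially formal.

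First I would apply the projection formula. Since $\pi^*\cL^n$ is locally free for every $n$, there is a natural isomorphism of $\cO_Y$-modules $\pi_*(\cF \otimes \pi^*\cL^n) \cong \pi_*\cF \otimes \cL^n$. Taking global sections (using $H^0(X,-) = H^0(Y,\pi_*-)$) yields
\[ H^0(X, \cF \otimes \pi^*\cL^n) \cong H^0(Y, \pi_*\cF \otimes \cL^n) \]
for every $n$. Specializing to $\cF = \cO_X$ and using $\pi_*\cO_X = \cO_Y$ (which holds because the elliptic fibrations considered in the paper have connected fibers), one obtains $H^0(X, \pi^*\cL) \cong H^0(Y, \cL)$, hence $\wedge^p H^0(X, \pi^*\cL) \cong \wedge^p H^0(Y, \cL)$ for every $p$. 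This identifies the three consecutive terms of the Koszul complex for $(X,\cF,\pi^*\cL)$ with the corresponding terms of the Koszul complex for $(Y,\pi_*\cF,\cL)$.

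Second I would verify compatibility with the Koszul differentials. The differential is built from contraction by a section of the line bundle, i.e., from multiplication by elements of $H^0(\cL)$. The projection formula isomorphism above is natural as an isomorphism of graded modules over $\bigoplus_n H^0(Y,\cL^n)$: multiplication by $\pi^*s$ in $H^0(X, \cF \otimes \pi^*\cL^n)$ corresponds to multiplication by $s$ in $H^0(Y, \pi_*\cF \otimes \cL^n)$. Consequently, the term-by-term identifications above intertwine the two Koszul differentials, giving an isomorphism of complexes and hence of their middle cohomology $K_{p,q}(X,\cF,\pi^*\cL) \cong K_{p,q}(Y,\pi_*\cF,\cL)$.

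The only point requiring care is this module-theoretic naturality of the projection formula, which is standard in both the algebraic and the coherent-analytic setting, so I would not expect a serious obstacle. The hypothesis $\pi_*\cO_X = \cO_Y$ is implicit in the paper's context (elliptic fibrations with connected fibers), and without it only the identification of the $\wedge^p$ factors would fail.
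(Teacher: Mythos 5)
Your proposal is correct and follows essentially the same route as the paper: apply the projection formula to identify the terms of the two Koszul complexes and observe that the identifications commute with the differentials. Your explicit remark that $\pi_*\cO_X=\cO_Y$ is needed to identify $H^0(X,\pi^*\cL)$ with $H^0(Y,\cL)$ is a point the paper leaves implicit, and it is a genuine hypothesis the lemma's statement omits.
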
 
\begin{proof} By the projection formula we have
\[ \pi_*(\cF\otimes (\pi^*\cL)^q)\cong \pi_*\cF \otimes \cL^q.\]
In particular we have isomorphisms
\[ H^0(X,\cF\otimes (\pi^*\cL)^q) \cong H^0 (Y,\pi_*\cF \otimes \cL^q)
\mbox{ and } H^0(X,\pi^*\cL)\cong H^0(Y,\cL)\]
 and these isomorphisms are well behaved with respect to the differentials, hence
 \[ K_{p,q}(X,\cF,\pi^*\cL)\cong K_{p,q}(Y,\pi_*\cF, \cL).\]
\end{proof}

A crucial ingredient for our proofs is the duality theorem. We apply this theorem only in the case of curves. In this case the statement simplifies to
\begin{theorem}[Duality Theorem]\label{dualThm}
Let $C$ be a smooth projective curve. Let $\cL$ be a base point free line bundle on $C$ and $r=h^0(\cL)-1$. Then
\[ K_{p,q}(C,\cL)\cong K_{r-1-p,2-q}(C,\Omega^1_C,\cL)^*.\]
\end{theorem}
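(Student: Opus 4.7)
The plan is to follow Green's original argument for the duality theorem, which expresses Koszul cohomology groups as sheaf cohomology of wedge powers of a tautological bundle, and then to apply Serre duality on the curve. The key auxiliary object is the kernel bundle
\[ M_\cL := \ker\bigl(H^0(\cL)\otimes \cO_C \to \cL\bigr),\]
which is locally free of rank $r$ precisely because $\cL$ is base point free; moreover $\det M_\cL \cong \cL^{-1}$.

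Taking $p$-th exterior powers of the defining sequence of $M_\cL$ and twisting by $\cL^q$ yields a short exact sequence
\[ 0 \to \wedge^{p+1} M_\cL \otimes \cL^{q-1} \to \wedge^{p+1} H^0(\cL)\otimes \cL^{q-1} \to \wedge^p M_\cL \otimes \cL^q \to 0,\]
and the analogous sequence with $\cL^q$ replaced by $\cL^{q-1}$. The associated long exact sequences on $C$ identify the Koszul differentials, so that after a diagram chase one obtains the standard identification (valid for any coherent sheaf $\cF$, see Green)
\[ K_{p,q}(C,\cF,\cL) \;\cong\; \coker\bigl(\wedge^{p+1}H^0(\cL)\otimes H^0(\cF\otimes \cL^{q-1}) \to H^0(\wedge^p M_\cL \otimes \cF \otimes \cL^q)\bigr).\]
On a curve the higher cohomology beyond $H^1$ vanishes, and an elementary chase in the same diagram also gives the cleaner form
\[ K_{p,q}(C,\cF,\cL) \;\cong\; H^1\bigl(C,\wedge^{p+1} M_\cL \otimes \cF \otimes \cL^{q-1}\bigr),\]
provided some auxiliary $H^1$ vanishes (e.g.\ $H^1(\cF\otimes\cL^{q-1})=0$); when that fails, one corrects by hand. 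I will apply this identity to $(\cF,p,q)=(\cO_C,p,q)$ on the left hand side and to $(\cF,p,q)=(\Omega^1_C,r-1-p,2-q)$ on the right hand side of the statement.

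Now Serre duality on the smooth curve $C$ gives
\[ H^1\bigl(\wedge^{r-p} M_\cL \otimes \Omega^1_C \otimes \cL^{1-q}\bigr) \;\cong\; H^0\bigl(\wedge^{r-p}M_\cL^{\vee}\otimes \cL^{q-1}\bigr)^{*},\]
and the isomorphism $\wedge^{r-p}M_\cL^{\vee}\cong \wedge^p M_\cL\otimes (\det M_\cL)^{-1}\cong \wedge^p M_\cL\otimes \cL$ converts the right side into $H^0(\wedge^p M_\cL \otimes \cL^q)^{*}$. A final dualisation of the defining exact sequence for $K_{p,q}(C,\cL)$ (again on a curve, $H^2$ is zero, simplifying the $\Ext$'s) matches this with $K_{p,q}(C,\cL)^{*}$, yielding the desired isomorphism. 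The main delicate point is the second step: one must be careful that the auxiliary vanishings used to promote the cokernel description to the clean $H^1$-description actually hold in the cases that matter, and to verify that the Serre duality pairing is compatible with the Koszul differentials (so that it really identifies the quotient on one side with the kernel on the other). Edge cases $p\in\{0,r-1\}$ and $q\in\{0,1,2\}$ require separate inspection because either $\wedge^{p+1}M_\cL$ or $\wedge^{r-p}M_\cL$ collapses or the twist makes a direct computation of the relevant $H^i$ necessary. Once these compatibilities are confirmed, the rest is bookkeeping with the indices $p\leftrightarrow r-1-p$ and $q\leftrightarrow 2-q$.
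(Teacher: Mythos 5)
The paper does not prove this statement; it simply cites \cite[Theorem 2.c.6]{GreenKos}, and your outline is essentially a faithful sketch of Green's own argument specialized to curves: the kernel bundle $M_{\cL}$ with $\det M_{\cL}\cong\cL^{-1}$, the identification of $K_{p,q}$ with a cokernel of sections of $\wedge^pM_{\cL}$-twists, and Serre duality via $\wedge^{r-p}M_{\cL}^{\vee}\cong\wedge^pM_{\cL}\otimes\cL$. The one point you flag as delicate is indeed the one that matters for this paper: in the application $q=1$, so $H^1(\Omega^1_C\otimes\cL^{1-q})=H^1(\Omega^1_C)\neq 0$ and the clean $H^1$-description fails; the fix is to use the full four-term exact sequence, i.e.\ $K_{p',q'}(C,\cF,\cL)\cong\ker\bigl(H^1(\wedge^{p'+1}M_{\cL}\otimes\cF\otimes\cL^{q'-1})\to\wedge^{p'+1}H^0(\cL)\otimes H^1(\cF\otimes\cL^{q'-1})\bigr)$, whose Serre dual is exactly the cokernel describing $K_{p,q}(C,\cL)$ (up to the harmless twist by $\det H^0(\cL)$), and on a curve no further vanishing hypotheses are needed since the range $1\le i\le n-1$ in Green's general theorem is empty. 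With that step made explicit your plan closes up and matches the cited proof.
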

 For a proof see \cite[Theorem 2.c.6]{GreenKos}
 
 \begin{lemma} \label{lemVan} Let $\cL$ be a line bundle of degree $d$ on a smooth projective curve $C$.
 Then $ K_{d+g-3,1}(C,\Omega^1_C,\Omega^1_C\otimes \cL)=0$  if either $d\geq 2$ or $d=1$ and $h^0(\cL)=0$ hold.
 \end{lemma}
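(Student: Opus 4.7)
The plan is to invoke the Duality Theorem (Theorem~\ref{dualThm}) on the base curve $C$ to convert the required vanishing into a statement about $K_{0,1}$, which will be trivial.

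First I would set $\cM:=\Omega^1_C\otimes\cL$. Under either branch of the hypothesis of the lemma, Corollary~\ref{corBpf} (really its proof, which takes place entirely on $C$) ensures that $\cM$ is base point free. Since $d\geq 1$, Serre duality gives $h^1(\cM)=h^0(\cL^{-1})=0$, so Riemann--Roch yields $h^0(\cM)=\deg(\cM)-g+1=g+d-1$. Consequently $r:=h^0(\cM)-1=g+d-2$.

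Next I would apply Theorem~\ref{dualThm} with the base point free line bundle $\cM$ in place of $\cL$, giving
\[ K_{p,q}(C,\cM) \cong K_{r-1-p,\,2-q}(C,\Omega^1_C,\cM)^*. \]
Specializing to $(p,q)=(0,1)$ makes the right-hand side equal to $K_{r-1,1}(C,\Omega^1_C,\cM)^*=K_{g+d-3,\,1}(C,\Omega^1_C,\cM)^*$, which is precisely the dual of the group whose vanishing we seek. So it suffices to prove $K_{0,1}(C,\cM)=0$.

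Finally, by the very definition of Koszul cohomology, $K_{0,1}(C,\cM)$ is the cokernel of the natural multiplication map
\[ H^0(\cM)\otimes H^0(\cO_C)\longrightarrow H^0(\cM), \]
and this map is an isomorphism because $H^0(\cO_C)=\C$; hence $K_{0,1}(C,\cM)=0$. The only nontrivial input is the base point freeness of $\cM$, and that is precisely what the two cases in the statement are arranged to guarantee via Corollary~\ref{corBpf}; there is no other obstacle to the argument.
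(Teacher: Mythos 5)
Your proof is correct and follows essentially the same route as the paper: base point freeness of $\Omega^1_C\otimes\cL$ via Corollary~\ref{corBpf}, duality (Theorem~\ref{dualThm}) to reduce to $K_{0,1}(C,\Omega^1_C\otimes\cL)$, and the observation that this group is the cokernel of an isomorphism. The only difference is that you spell out the Riemann--Roch computation of $h^0(\Omega^1_C\otimes\cL)=g+d-1$, which the paper cites from Proposition~\ref{propBasic}.
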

 \begin{proof}
 From Corollary~\ref{corBpf} it follows that $\Omega^1_C\otimes \cL$ is base point free, moreover $p_g(X)=h^0(\Omega^1_C\otimes \cL)=g+d-1$.  Hence we can apply  Theorem~\ref{dualThm} and we obtain
 \[ K_{d+g-3,1}(C,\Omega^1_C,\Omega^1_C\otimes \cL)\cong K_{0,1}(C,\Omega^1_C\otimes \cL)^*.\]
 We claim that the latter group is zero. Note that by definition this group  is the cokernel of the multiplication map
 \[ H^0(C,\cO) \otimes  H^0(C,\cL) \to H^0(C,\cL).\]
 which is obivously trivial.
\end{proof}

\section{Nonconstant $j$-invariant}\label{secNcst}

In the case of nonconstant $j$-invariants the proof of the infinitesimal Torelli theorem follows almost directly from Green's version of Kii's criterion, which we first recall for the reader's convenience.
\begin{theorem}[Kii-Green]\label{KiiThm}
Let $X$ be a compact K\"ahler manifold of dimension $n$. Suppose $\Omega^n_X$ is base point free. Let $p_g=h^0(\Omega^n_X)$. Then $X$ satisfies infinitesimal Torelli if and only if $K_{p_g-2,1}(X,\Omega^{n-1},\Omega^n)=0$.
\end{theorem}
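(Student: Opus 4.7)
The plan is to establish the stated equivalence by translating both conditions into the surjectivity of the same multiplication map on $X$, and then invoking Green's duality in Koszul cohomology.

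First, I would identify the relevant part of the period map differential. Griffiths's description gives the cup-product map
\[ d\Phi: H^1(X,T_X)\to \Hom(H^0(\Omega^n_X), H^1(\Omega^{n-1}_X)), \]
obtained via the contraction $T_X\otimes \Omega^n_X\cong \Omega^{n-1}_X$. Its injectivity is equivalent by Serre duality to surjectivity of the multiplication map
\[ \mu: H^0(\Omega^n_X)\otimes H^{n-1}(\Omega^1_X)\longrightarrow H^{n-1}(\Omega^1_X\otimes \Omega^n_X). \]
The role of the base-point-freeness of $\Omega^n_X$ is twofold: on the one hand, it guarantees that no lower graded pieces of the period map produce obstructions independent of this one (so that injectivity of $d\Phi$ is genuinely equivalent to infinitesimal Torelli, and not merely sufficient for it), and on the other hand it produces the exact sequence needed in step two.

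Second, I would encode $\coker(\mu)$ as a Koszul cohomology group. Set $L=\Omega^n_X$ and $V=H^0(L)$. Base-point-freeness yields the tautological sequence $0\to M_L\to V\otimes \cO_X\to L\to 0$. Twisting by $\Omega^1_X$ and taking the long exact sequence identifies $\coker(\mu)$ with the kernel of $H^n(M_L\otimes \Omega^1_X)\to V\otimes H^n(\Omega^1_X)$. Passing to the Koszul-type resolution of the wedge powers $\wedge^p M_L$ in terms of $V$ and powers of $L$, and running the associated hypercohomology spectral sequence, one rewrites this kernel as a Koszul cohomology group of the form $K_{\bullet,\bullet}(X,\Omega^1_X,L)$. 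Green's Duality Theorem for Koszul cohomology (the higher-dimensional analogue of Theorem~\ref{dualThm}) then converts this into the stated group $K_{p_g-2,1}(X,\Omega^{n-1}_X,\Omega^n_X)$.

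The principal difficulty lies in the second step: the indices are determined by a careful spectral-sequence bookkeeping, and one must verify that the cokernel of $\mu$ lands precisely at bidegree $(p_g-2,1)$ with coefficient sheaf $\Omega^{n-1}_X$ and not at a neighbouring bidegree. Once this identification is secured, both the Koszul duality and the Serre-duality reformulation of infinitesimal Torelli are formal.
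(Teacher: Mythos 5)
The paper does not prove this statement at all; it is quoted verbatim from Green's \emph{Koszul cohomology and the geometry of projective varieties} (Corollary 4.d.3), so your outline has to be judged against Green's argument. Your first two reductions are correct and are indeed how that argument begins: Serre duality converts injectivity of $H^1(T_X)\to\Hom(H^0(\Omega^n_X),H^1(\Omega^{n-1}_X))$ into surjectivity of $\mu$, and the evaluation sequence $0\to M_L\to V\otimes\cO_X\to L\to 0$ (with $L=\Omega^n_X$, $V=H^0(L)$, which needs exactly the base-point-freeness hypothesis) twisted by $\Omega^1_X$ identifies $\coker(\mu)$ with $\ker\bigl(H^n(M_L\otimes\Omega^1_X)\to V\otimes H^n(\Omega^1_X)\bigr)$. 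One correction to your framing: base-point-freeness has nothing to do with whether injectivity of this single graded piece is equivalent to infinitesimal Torelli. The statement concerns the period map of $H^{n,0}\subset H^n$, for which that equivalence is definitional; in the paper's application $n=2$, where the two graded pieces of the differential are mutually dual, so it coincides with infinitesimal Torelli for the full weight-two Hodge structure.

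The genuine gap is in your closing step, which is the one carrying the actual content. You propose to finish with a hypercohomology spectral sequence followed by ``Green's Duality Theorem for Koszul cohomology (the higher-dimensional analogue of Theorem~\ref{dualThm})'' applied on $X$. That theorem carries vanishing hypotheses of the shape $H^i(\cF\otimes L^{q-i})=0$ for $1\le i\le n-1$; for $\cF=\Omega^{n-1}_X$, $L=\Omega^n_X$, $q=1$ the relevant condition includes $H^1(\Omega^{n-1}_X)=0$, which on a compact K\"ahler manifold forces the differential of the period map to vanish. The paper makes exactly this point in the remark following Theorem~\ref{mainThm}: Koszul duality must be applied on the base curve, never on $X$, precisely because its hypotheses fail there. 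So the tool you name cannot close your argument. The correct finish avoids the duality theorem entirely: apply Serre duality separately to the two terms of your kernel. Since $\rk M_L=p_g-1$ and $\det M_L\cong L^{-1}$, one has $M_L^\vee\cong\wedge^{p_g-2}M_L\otimes L$, hence $H^n(M_L\otimes\Omega^1_X)^*\cong H^0(\wedge^{p_g-2}M_L\otimes\Omega^{n-1}_X\otimes L)$ and $(V\otimes H^n(\Omega^1_X))^*\cong\wedge^{p_g-1}V\otimes H^0(\Omega^{n-1}_X)$ after trivializing $\wedge^{p_g}V$; the dual of your kernel is then the cokernel of the Koszul map $\wedge^{p_g-1}V\otimes H^0(\Omega^{n-1}_X)\to H^0(\wedge^{p_g-2}M_L\otimes\Omega^{n-1}_X\otimes L)$, which for base point free $L$ equals $K_{p_g-2,1}(X,\Omega^{n-1}_X,\Omega^n_X)$ with no further hypotheses. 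Until that identification is actually carried out, the ``if and only if'' is not established.
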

For a proof of this theorem see \cite[Corollary 4.d.3]{GreenKos}.

\begin{theorem}\label{mainThm} Let $\pi: X \to C$ be an elliptic surface with nonconstant $j$-invariant and without multiple fibers, such that either $d\geq 2$ or both $d=1$ and $h^0(\cL)=0$ hold. Then $X$ satisfies infinitesimal Torelli.
\end{theorem}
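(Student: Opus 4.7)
The plan is to apply Green's version of Kii's criterion (Theorem~\ref{KiiThm}) and then transfer the resulting Koszul vanishing question from $X$ down to the base curve $C$, where it becomes exactly the statement already proved in Lemma~\ref{lemVan}. First I would verify the hypothesis of Theorem~\ref{KiiThm}: base point freeness of $\Omega^2_X$ is immediate from Corollary~\ref{corBpf} under either of our assumptions. So it suffices to show
\[ K_{p_g-2,1}(X,\Omega^1_X,\Omega^2_X) = 0, \]
where $p_g = h^0(\Omega^2_X) = g + d - 1$ by Proposition~\ref{propBasic}(3) (note that $\cL \not\cong \cO_C$ since $\deg(\cL) = d \geq 1$).

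Next I would push this group down to $C$. By Proposition~\ref{propBasic}(3) we have $\Omega^2_X \cong \pi^*(\Omega^1_C\otimes \cL)$, so Lemma~\ref{lemMor} with $\cF = \Omega^1_X$ gives
\[ K_{p_g-2,1}(X,\Omega^1_X,\pi^*(\Omega^1_C\otimes \cL)) \cong K_{p_g-2,1}(C,\pi_*\Omega^1_X,\Omega^1_C\otimes \cL). \]
This is exactly the point where the nonconstancy of the $j$-invariant enters: by Proposition~\ref{propBasic}(4) we can identify $\pi_*\Omega^1_X$ with $\Omega^1_C$. Substituting, and rewriting the degree index as $p_g - 2 = g + d - 3$, the required vanishing becomes
\[ K_{g+d-3,1}(C,\Omega^1_C,\Omega^1_C\otimes \cL) = 0, \]
which is precisely the conclusion of Lemma~\ref{lemVan} under the hypotheses $d \geq 2$ or ($d = 1$ and $h^0(\cL) = 0$).

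There is no substantial obstacle to overcome: the argument is essentially a direct assembly of the preliminary results, with all the real content sitting inside the Kii-Green criterion on the one hand and inside the Koszul duality computation in Lemma~\ref{lemVan} on the other. The only mild subtlety worth mentioning is the boundary case $(g,d) = (0,2)$, where $p_g = 1$ and the index $p_g - 2 = -1$ makes $\wedge^{p_g-2} H^0(\Omega^2_X)$ vanish for formal reasons, so the criterion is satisfied trivially; this is of course consistent with the classical infinitesimal Torelli theorem for elliptic K3 surfaces.
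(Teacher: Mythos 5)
Your proposal is correct and follows essentially the same route as the paper's own proof: verify base point freeness of $\Omega^2_X$ via Corollary~\ref{corBpf}, invoke the Kii--Green criterion, push the Koszul group down to $C$ via Lemma~\ref{lemMor} and Proposition~\ref{propBasic}, and conclude with Lemma~\ref{lemVan}. The extra remark on the boundary case $(g,d)=(0,2)$ is a harmless addition; no further changes are needed.
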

\begin{proof}
Recall that by Corollary~\ref{corBpf} we have that $\Omega^2_X$ is base point free. 
Hence we may apply the Kii-Green criterion  Theorem~\ref{KiiThm} and it suffices to determine  whether $K_{p_g-2,1}(X,\Omega^1_X,\Omega^2_X)$ vanishes.

Using Lemma~\ref{lemMor} for the first isomorphism and various parts of Proposition~\ref{propBasic} for the second isomorphism we obtain 
 \[ K_{p_g-2,1}(X,\Omega^1_X,\Omega^2_X)\cong   K_{p_g-2,1}(C,\pi_*\Omega^1_X,\Omega^1_C\otimes \cL)\cong K_{d+g-3,1}(C,\Omega^1_C,\Omega^1_C\otimes \cL).\]
 The third group vanishes by Lemma~\ref{lemVan}.  Therefore $K_{p_g-2,1}(X,\Omega^1_X,\Omega^2_X)$ vanishes. Hence $X$ satisfies infinitesimal Torelli. 
 \end{proof}

\begin{remark}
If $j(\pi)$ is nonconstant then $d\geq 1$, hence the only case with nonconstant $j$-invariant and not covered by the above theorem is $d=1$ and $h^0(\cL)>0$. 

If $g=0$ and $d=1$ then $h^0(\cL)=2$ by Riemann-Roch. In this case $X$ is a rational elliptic surface and infinitesimal Torelli does not hold.
If $g=d=1$ then $h^0(\cL)=1$ by Riemann-Roch. The counterexamples of  Ikeda \cite{IkedaTor} are of this type. We discuss the case $d=1$ and $g>1$ in Section~\ref{secConj}.
\end{remark}

\begin{remark} In the proof we use Koszul duality on $C$. It is very crucial to work on $C$ rather than on $X$. Suppose $Y$ is an $n$-dimensional complex manifold and we would like to apply Koszul duality \cite[Theorem 2.c.6]{GreenKos} to $K_{p_g-2,1}(Y,\Omega_Y^{n-1},\Omega_Y^n)$. Then one of the hypothesis of this theorem is that 
$H^1(\Omega_Y^{n-1})$ vanishes. However, if this happens then the differential of the period map is zero anyway and we do not obtain any interesting statement.
\end{remark}

If $C$ is a hyperelliptic curve of genus $g$, then for any $d>1$ it is straightforward to construct an example of an elliptic surface over $C$ with $\deg(\cL)=d$ and nonconstant $j$-invariant. However, to construct an example with $\deg(\cL)=1$ and $h^0(\cL)=0$ (i.e, one satisfying infinitesimal Torelli) is more involved. We will now present such an example, for which we need the following basic result.

\begin{lemma}\label{lemTwist} Let $\cL$ be an effective line bundle of degree $1$ on a smooth projective curve $C$ of genus at least 2. Then there exists an element $\eta \in \Pic(C)[2]$ such that $\cL\otimes \eta$ is not effective.
\end{lemma}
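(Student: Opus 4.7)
The plan is to write $\cL = \cO_C(p)$ (which is possible since $\cL$ is effective of degree $1$) and show that the subset of $\Pic(C)[2]$ for which $\cL \otimes \eta$ is effective is much smaller than all of $\Pic(C)[2]$.

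First I would translate the question: $\cL \otimes \eta$ is an effective line bundle of degree $1$ if and only if $\cO_C(p) \otimes \eta \cong \cO_C(q)$ for some $q \in C$, which happens if and only if $\eta \cong \cO_C(q-p)$ for some $q \in C$; combined with $\eta^{\otimes 2} \cong \cO_C$, this is equivalent to the existence of $q \in C$ with $2q \sim 2p$. Because $g \geq 1$, the Abel--Jacobi map $q \mapsto \cO_C(q-p)$ is injective, so this correspondence gives a bijection
\[ \{\eta \in \Pic(C)[2] : \cL \otimes \eta \text{ is effective}\} \;\longleftrightarrow\; \{q \in C : 2q \sim 2p\}. \]

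Next I would bound the right-hand side by looking at the linear system $|2p|$. If $2p$ is non-special then $h^0(2p)=3-g\leq 1$, so $|2p|=\{2p\}$ and only $q=p$ works. If $2p$ is special, Clifford's inequality gives $h^0(2p)\leq 2$; in the case $h^0(2p)=2$ the system $|2p|$ is a $g^1_2$, hence $C$ is hyperelliptic and the divisors in $|2p|$ of the form $2q$ are precisely the ramification divisors of the associated degree-$2$ map $C\to \Ps^1$, of which there are $2g+2$ by Riemann--Hurwitz. In all cases
\[ \#\{q \in C : 2q \sim 2p\} \leq 2g+2. \]

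Finally, a numerical comparison: $|\Pic(C)[2]| = 2^{2g}$, and $2g+2 < 2^{2g}$ for all $g \geq 2$. So the set of $\eta \in \Pic(C)[2]$ for which $\cL \otimes \eta$ is effective is a proper subset of $\Pic(C)[2]$, and the required $\eta$ exists. The only substantive step is the first one (setting up the bijection via $2q\sim 2p$); the bound and the comparison with $2^{2g}$ are essentially automatic once the hyperelliptic case has been identified.
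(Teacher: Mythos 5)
Your proposal is correct and follows essentially the same route as the paper: identify the $\eta$ for which $\cL\otimes\eta$ is effective with points $q$ satisfying $2q\sim 2p$, observe that $p\neq q$ forces $C$ hyperelliptic with $p,q$ Weierstrass points, and compare the resulting count ($\leq 2g+2$) with $|\Pic(C)[2]|=2^{2g}$. Your detour through $|2p|$ and Clifford's theorem is a slightly more systematic way of reaching the same hyperelliptic dichotomy, but the substance is identical.
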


\begin{proof} Suppose $p$ is such that $\cL\cong\cO_C(p)$. Since $g\geq 1$ we have that $p$ is unique. 

Pick now an $\eta\in \Pic^0(C)[2]\setminus \{0\}$. If $\cL\otimes \eta$ is effective then $\eta\cong \cO_C(q-p)$ for some $q\in C$. From $\eta^2\cong \cO_C$ it follows that $2p$ and $2q$ are linearly equivalent. Hence $C$ is hyperelliptic and both $p$ and $q$ are Weierstrass points. There are precisely $2g+1$ possibilities for $q$. Since $2^{2g}>2g+2$ for $g \geq 2$ there exists an $\eta$ which is not of this form and therefore $\cL\otimes \eta$ is not effective.
\end{proof}

\begin{proposition} Let $C$ be either a hyperelliptic curve of genus $g\geq 2$ or a trigonal curve of genus $g\geq 3$, with a point $p$ with ramification index 3. Then there exists a Jacobian elliptic surface $\pi:X\to C$ with $d=1$ and such that $X$ satisfies infinitesimal Torelli.
\end{proposition}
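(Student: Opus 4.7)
The plan is to construct a Jacobian Weierstrass model over $C$ whose minimal resolution satisfies the hypotheses of Theorem~\ref{mainThm}: namely $d=1$, $h^{0}(\cL)=0$, and nonconstant $j$-invariant. Theorem~\ref{mainThm} then yields infinitesimal Torelli for free.

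I would begin with the effective auxiliary bundle $\cL_{0}=\cO_{C}(p)$, where $p$ is a Weierstrass point in the hyperelliptic case and the prescribed triple ramification point in the trigonal case. The key feature of $p$ is that $\phi^{-1}(\phi(p))=kp$ with $k=2$ (hyperelliptic) or $k=3$ (trigonal), giving $\cL_{0}^{k}\cong \phi^{*}\cO_{\Ps^{1}}(1)$. Hence $\cL_{0}^{6}$ is a pullback from $\Ps^{1}$ in both cases, and in the hyperelliptic case so is $\cL_{0}^{4}$. This lets me write down explicit Weierstrass data: in the hyperelliptic case I take $A=\phi^{*}a$ and $B=\phi^{*}b$ for general $a\in H^{0}(\cO_{\Ps^{1}}(2))$ and $b\in H^{0}(\cO_{\Ps^{1}}(3))$; in the trigonal case, since $\cL_{0}^{4}$ is not a pullback, I instead take $A$ to be the canonical section $s_{4p}\in H^{0}(\cO_{C}(4p))$ (vanishing only at $p$, to order $4$) and $B=\phi^{*}b$ for some $b\in H^{0}(\cO_{\Ps^{1}}(2))$ with $b(\phi(p))\neq 0$. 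A direct check shows that in either case $A^{3}/B^{2}$ is a nonconstant rational function on $C$, that $4A^{3}+27B^{2}$ does not vanish identically, and that $(A,B)$ is relatively minimal; in the trigonal case the only delicate point is $q=p$, where $v_{p}(A)=4$ is compensated by $v_{p}(B)=0<6$.

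To turn $\cL_{0}$ into the non-effective $\cL$ I need, I apply Lemma~\ref{lemTwist} (whose hypothesis $g\geq 2$ is satisfied in both cases): there is $\eta\in \Pic(C)[2]$ such that $\cL:=\cL_{0}\otimes \eta$ is not effective. Because $\eta$ is $2$-torsion, $\cL^{4}=\cL_{0}^{4}$ and $\cL^{6}=\cL_{0}^{6}$, so the same sections $A$ and $B$ define a Weierstrass datum for $\cL$; its minimal resolution is the desired Jacobian elliptic surface $\pi:X\to C$ with $d=\deg\cL=1$, $h^{0}(\cL)=0$, and nonconstant $j$-invariant, and Theorem~\ref{mainThm} then gives the infinitesimal Torelli property. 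The main obstacle is the construction of $A$ in the trigonal case: since $\cL_{0}^{4}$ fails to be a pullback from $\Ps^{1}$, one cannot use the uniform ``everything pulled back from $\Ps^1$'' recipe of the hyperelliptic case, and both relative minimality at $p$ and nonconstancy of $A^{3}/B^{2}$ have to be checked by hand for the canonical section.
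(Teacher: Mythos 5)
Your proof is correct and follows essentially the same route as the paper: build Weierstrass data over $\cO_C(p)$ and then use Lemma~\ref{lemTwist} to twist by a $2$-torsion bundle, which leaves $\cL^4$ and $\cL^6$ (hence the data $(A,B)$) unchanged while killing $h^0(\cL)$, so that Theorem~\ref{mainThm} applies. The only difference is that you exhibit explicit sections $A,B$ and verify nonconstancy of $j$ and relative minimality by hand, where the paper takes general $(A,B)$ and argues that the constant-$j$ locus is a proper subvariety.
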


\begin{proof}
Suppose first that $C$  is a hyperelliptic curve and $p$ a Weierstrass point, let $\cL=\cO_C(p)$. Then $h^0(\cL)=1$, $h^0(\cL^2)=2$, $h^0(\cL^4)\geq 3$, $h^0(\cL^6)\geq 4$.

Pick now general elements $A\in H^0(\cL^4)$, $B\in H^0(\cL^6)$. Then with $(\cL, A,B)$ we can associate an elliptic surface $\pi': X'\to C$, and if $(A,B)$ is general then the $j$-invariant is nonconstant. (The $j$-invariant is constant if $A=0$ or $B=0$ or $A=\lambda G^2, B=\mu G^3$, for some $G\in H^0(\cL^2)$, these conditions define proper subvarieites of $H^0(\cL^4)\times H^0(\cL^6)$.)

We cannot apply Theorem~\ref{mainThm} to $\pi':X'\to C$  since $h^0(\cL)>0$. However, since $g>1$  there exists a line bundle $\eta$ such that $\eta^{\otimes 2}\cong\cO_C$ and $h^0(\cL\otimes \eta)=0$. Since $(\cL\otimes \eta)^{\otimes 2}=\cL^2$ we can identify $H^0(\cL^k)$ with $H^0((\cL\otimes\eta)^k)$ for $k=4,6$ and associate an elliptic surface $\pi:X\to C$ with the Weierstrass data $(\cL\otimes \eta, A,B)$. This surface has still $d=1$ but satisfies the hypothesis of our theorem and therefore satisfies infinitesimal Torelli.

A similar example exists if $C$ is trigonal and the degree 3 map has a point $p$ with ramification index $3$. If we take $\cL\cong \cO_C(p)$ then $h^0(\cL^6)>h^0(\cL^4)>0$ and we can apply the same reasoning.
\end{proof}

\begin{remark} By multiplying $\cL$ with a  line bundle of order 2 we considered a quadratic twist of the original fibration, i.e., there is an unramified degree two cover $\tilde{C}$ of $C$, such that the minimal smooth models of $X\times_C \tilde{C}$ and $X'\times_C\tilde{C}$ are isomorphic.
\end{remark}

\section{Constant $j$-invariant} \label{secCst}
In the case of constant $j$-invariant we obtain also an infinitesimal Torelli result, but in this case it does not directly follow from duality in Koszul cohomology.

In the sequel we have to exclude a few cases, namely $g=0, d\leq 2$ (Products $E\times \Ps^1$; rational elliptic surfaces and K3 sufaces); $d=1$ and $h^0(\cL)\neq 0$ and $d=0$. The case $d=0$ will be treated in Section~\ref{secFiber}.

\begin{theorem}\label{prpRedKos} Let $\pi: X \to C$ be an elliptic surface with constant $j$-invariant. Let us denote with $s$ the number of singular fibers. 
Assume that one of the following conditions holds
\begin{itemize}
\item $d\geq 3$; 
\item $d=2$ and $g>0$ ;
\item $d=1$ and $h^0(\cL)=0$.\end{itemize}
Then $X$ satisfies infinitesimal Torelli  if and only if the multiplication map
\begin{equation}\label{eqnCupCst}
\mu_{\pi} : H^0(C,\Omega^1_C\otimes \cL^{-1}(\Delta))\otimes H^0(C,\Omega^1_C\otimes \cL)\to H^0((\Omega^1_C)^2(\Delta))
\end{equation}
is surjective.
\end{theorem}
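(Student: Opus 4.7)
By Corollary~\ref{corBpf} the hypotheses ensure that $\Omega^2_X$ is base point free, so the Kii-Green criterion (Theorem~\ref{KiiThm}), combined with Lemma~\ref{lemMor} and the identification $\Omega^2_X\cong\pi^*(\Omega^1_C\otimes\cL)$, reduces infinitesimal Torelli to the vanishing of $K_{p_g-2,1}(C,\pi_*\Omega^1_X,M)$, where $M:=\Omega^1_C\otimes\cL$. My plan is to exploit the extension
\[0\to \Omega^1_C\to \pi_*\Omega^1_X\to \cL(-\Delta)\to 0\]
of Proposition~\ref{propBasic}(\ref{exactseq}) to induce a long exact sequence of Koszul cohomology groups. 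For this I would verify that tensoring with each $M^q$ keeps the short exact sequence exact on global sections in the relevant range: for $q\geq 1$ this follows from the vanishing of $H^1(\Omega^1_C\otimes M^q)$ by a degree estimate under the standing hypotheses; for $q\leq -1$ both sides vanish for degree reasons; and for $q=0$ exactness follows from the observation that $\deg\cL(-\Delta)=d-s<0$ in the constant $j$-invariant case (since $s\geq\tfrac{6}{5}d>d$), so that $H^0(\cL(-\Delta))=0$.

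The resulting fragment of the long exact sequence reads
\[K_{p_g-2,1}(C,\Omega^1_C,M)\to K_{p_g-2,1}(C,\pi_*\Omega^1_X,M)\to K_{p_g-2,1}(C,\cL(-\Delta),M)\to K_{p_g-3,2}(C,\Omega^1_C,M).\]
The left end is zero by Lemma~\ref{lemVan}, and the right end vanishes by an application of the duality Theorem~\ref{dualThm} that identifies it with $K_{1,0}(C,\cO_C,M)^*$; this latter group is seen to be zero because $H^0(M^{-1})=0$ while the Koszul differential $H^0(\cO_C)\otimes H^0(M)\to H^0(M)$ is the canonical isomorphism. Hence the middle map is an isomorphism, $K_{p_g-2,1}(C,\pi_*\Omega^1_X,M)\cong K_{p_g-2,1}(C,\cL(-\Delta),M)$. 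A second application of Koszul duality, now to the line bundle $\cL(-\Delta)$ with $r=h^0(M)-1=p_g-1$, identifies the right-hand side with $K_{0,1}(C,\Omega^1_C\otimes\cL^{-1}(\Delta),M)^*$, which is by definition the dual of $\coker\mu_\pi$. This yields the stated equivalence.

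The main obstacle I anticipate is making the long exact sequence of Koszul cohomology rigorous: the naive setup demands $H^1(\Omega^1_C\otimes M^q)=0$ for all relevant $q$, and this fails at $q=0$ where $H^1(\Omega^1_C)\cong\C$. The rescue is the particular fact in the constant $j$-invariant case that $H^0(\cL(-\Delta))=0$, which ensures that the global sections sequence at $q=0$ remains exact (trivially) on the right. Once this technical point is handled, the remaining steps are direct applications of the Koszul duality and of the vanishing Lemma~\ref{lemVan} already developed for the nonconstant case.
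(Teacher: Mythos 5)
Your proposal is correct and follows essentially the same route as the paper's proof: reduction via Kii--Green and Lemma~\ref{lemMor} to a Koszul group on $C$, the long exact sequence in Koszul cohomology induced by $0\to\Omega^1_C\to\pi_*\Omega^1_X\to\cL(-\Delta)\to 0$ (with the same case analysis in $q$ to justify exactness on global sections, including the key point that $H^0(\cL(-\Delta))=0$ rescues the $q=0$ case), the vanishing of the two outer terms via Lemma~\ref{lemVan} and duality, and a final application of duality identifying the remaining group with $\coker\mu_\pi$. No substantive differences from the paper's argument.
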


\begin{proof}
Recall that by Corollary~\ref{corBpf} we have that $\Omega^2_X$ is base point free. 
Hence we may apply Kii-Green criterion  Theorem~\ref{KiiThm} and it suffices to determine  whether $K_{p_g-2,1}(X,\Omega^1_X,\Omega^2_X)$ vanishes.
By Proposition~\ref{propBasic} we have that $p_g(X)=g+d-1$.
As in the proof of Theorem~\ref{mainThm}  we obtain
 \[ K_{p_g-2,1}(X,\Omega^1_X,\Omega^2_X)\cong   K_{p_g-2,1}(C,\pi_*\Omega^1_X,\Omega^1_C\otimes \cL)\cong K_{d+g-3,1}(C,\pi_*\Omega^1_X,\Omega^1_C\otimes \cL).\]
We will now calculate the right hand side.

For every $q\in \Z$ the short exact sequence from Proposition~\ref{propBasic} tensored with $(\Omega^1_C\otimes \cL)^q$ yields a long exact sequence, which starts with
\begin{equation}\label{shrtExa} 0\to H^0(\Omega^1_C \otimes (\Omega^1_C\otimes \cL)^q)\to H^0(\pi_*\Omega^1_X \otimes (\Omega^1_C\otimes \cL)^q) \to H^0(\cL(-\Delta)\otimes (\Omega^1_C\otimes \cL)^q).\end{equation}
We will now show that this is actually a short exact sequence, i.e., the final map is surjective.
From the existence of the long exact sequence it follows that the cokernel of the final map is a subspace of $H^1(\Omega^1_C \otimes (\Omega^1_C\otimes \cL)^q)$. 
Note that our assumptions imply that either $d\geq 3$; $d=2$ and $g\geq 1$ or $d=1$ and $g\geq 2$. Hence $\deg(\Omega^1_C\otimes \cL)=2g+d-2>0$. It follows now that for $q\geq 1$ the group $H^1(\Omega^1_C \otimes (\Omega^1_C\otimes \cL)^q)$ vanishes for degree reasons.

For $q\leq 0$, note that $s\geq d+1$ and $2g-2+d>0$. In particular,  we have that $\deg (\cL(-\Delta)\otimes (\Omega^1_C\otimes \cL)^q)= d-s +q(2g-2+d)<0$. Therefore $h^0(\cL(-\Delta)\otimes (\Omega^1_C\otimes \cL)^q)=0$ and the map is surjective.

By \cite[Corollary 1.d.4]{GreenKos} we can use the short exact sequence (\ref{shrtExa}) to obtain the following long exact sequence in Koszul cohomology 
\begin{eqnarray*}
 \dots \to K_{d+g-3,1}(C,\Omega^1_C,\Omega^1_C\otimes \cL) &\to&  K_{d+g-3,1}(C,\pi_*\Omega^1_X,\Omega^1_C\otimes \cL) \to  \\
  \to K_{d+g-3,1}(C,\cL(-\Delta),\Omega^1_C\otimes \cL)
 &\to& K_{d+g-4,2}(C,\Omega^1_C,\Omega^1_C\otimes \cL) \to  \dots \end{eqnarray*}
The group $K_{d+g-3,1}(C,\Omega^1_C,\Omega^1_C\otimes \cL)$ vanishes by Lemma~\ref{lemVan}. The group at the end, $K_{d+g-4,2}(C,\Omega^1_C,\Omega^1_C\otimes \cL)$, is dual to $K_{1,0}(C, \Omega^1_C\otimes \cL)$ by Theorem~\ref{dualThm}.
By definition, this group  is the cohomology of
\[ H^0((\Omega^1_C\otimes \cL)^{-1} ) \otimes \wedge^{2} H^0(\Omega^1_C \otimes \cL) \to H^0(\cO_C) \otimes   H^0(\Omega^1_C\otimes \cL) \to H^0(\Omega^1_C \otimes \cL)\otimes \C .\]
The group $H^0(C,(\Omega^1_C\otimes \cL)^{-1} )$ vanishes for degree reasons, whereas the second arrow is an isomorphism. Hence $ K_{1,0}(C, \Omega^1_C\otimes \cL)$ vanishes.
Therefore we have an isomorphism
\[ K_{d+g-3,1}(C,\pi_*\Omega^1_X,\Omega^1_C\otimes \cL)  \cong K_{d+g-3,1}(C,\cL(-\Delta),\Omega^1_C\otimes \cL).\]
Hence $X$ satisfies infinitesimal Torelli if and only if the latter group vanishes.

To obtain the final statement we can use Theorem~\ref{dualThm} to obtain
\[ K_{p_g-2,1}(C, \cL(-\Delta),\Omega^1_C\otimes \cL)^* \cong  K_{0,1}(C, \Omega^1_C\otimes \cL^{-1}(\Delta),\Omega^1_C\otimes \cL).\]
This latter group is the cokernel of the multiplication map
\[
 H^0(\Omega^1_C\otimes \cL^{-1}(\Delta))\otimes H^0(\Omega^1_C\otimes \cL) \to H^0((\Omega^1_C)^ 2(\Delta)).\]
\end{proof}

Multiplication maps of sections of line bundles have been extensively studied. We will now show that $\mu_\pi$ is surjective in many case.  For the first result we use  the $H^0$-lemma of Green.
\begin{lemma} \label{lemKosVanA} Let $\pi: X \to C$ be an elliptic surface without multiple fibers with constant $j$-invariant and $d\geq 1$. If $d=1$ then suppose that $h^0(\cL)=0$.
If one of the following holds 
\begin{enumerate}
\item  $d\geq 3$ and $s\geq d+2$;
\item  $d\in \{1,2\}$ and $s\geq d+3$;
\item $d\in \{1,2\}$, $s=d+2$ and $h^0(\cL^{-2}(\Delta))=0$;
\end{enumerate}
then the map $\mu_{\pi}$ is surjective.
\end{lemma}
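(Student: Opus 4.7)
The plan is to reduce the surjectivity of $\mu_\pi$ to a cohomological inequality via Green's $H^0$-lemma \cite{GreenKos}, and then to verify this inequality by a short case analysis. Setting $A=\Omega^1_C\otimes\cL$ and $B=\Omega^1_C\otimes\cL^{-1}(\Delta)$, the $H^0$-lemma asserts that the multiplication $H^0(A)\otimes H^0(B)\to H^0(A\otimes B)$ is surjective provided $A$ is base point free and $h^1(B\otimes A^{-1})\leq h^0(A)-2$. The first condition is granted by Corollary~\ref{corBpf} in all three cases, and after simplifying $B\otimes A^{-1}=\cL^{-2}(\Delta)$ and applying Serre duality and Riemann-Roch with $h^0(A)=g+d-1$ (valid since $\deg A\geq 2g-1$), the cohomological condition becomes
\[ h^0(\cL^{-2}(\Delta))\leq s-d-2. \]

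The remaining work is numerical. In case (1), the degree $s-2d\leq s-d-3$ of $\cL^{-2}(\Delta)$ is low enough that either $h^0=0$ for degree reasons or the crude estimate $h^0\leq \deg+1$ closes the inequality. In case (2), Clifford's theorem combined with the standard bound $h^0\leq\deg$ for nontrivial effective line bundles on a curve of positive genus yields the inequality (noting $d=1$ forces $g\geq 2$ by Remark~\ref{remBps}); the edge case $g=0$, $d=2$, $s\geq 5$ makes $h^0(A)-2$ negative, so one instead applies the $H^0$-lemma with the roles of $A$ and $B$ swapped, where the base point freeness of $B$ and a direct degree computation on $\Ps^1$ give the required bound. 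Case (3) is the saturated situation: $s=d+2$ forces $s-d-2=0$, and the explicit hypothesis $h^0(\cL^{-2}(\Delta))=0$ matches the bound exactly.

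The sharpest obstacle I anticipate lies in the tight subcase $(d,g,s)=(1,2,4)$ of case (2): here every genus-$2$ curve $C$ is hyperelliptic, $\deg(\cL^{-2}(\Delta))=2$, and Clifford's bound only delivers $h^0\leq 2$ while we require $h^0\leq 1$. The extremal configuration $\cL^{-2}(\Delta)\cong\Omega^1_C$ is not automatically excluded by the general structure of the fibration. To handle this I would upgrade the plain $H^0$-lemma to the base-point-free pencil trick applied to $A$ (which is itself a $g^1_3$ under our hypotheses), identifying the cokernel of $\mu_\pi$ with a subquotient of $H^1(\cL^{-2}(\Delta))$, and resolve the residual obstruction either by a direct Koszul computation or by exploiting geometric constraints specific to Weierstrass models with constant $j$-invariant.
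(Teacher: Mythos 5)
Your proposal follows the same route as the paper's own proof: Green's $H^0$-lemma applied with $A=\Omega^1_C\otimes\cL$ as the base-point-free factor, the identical numerical reduction (the paper phrases it as $h^1(\cL^{-2}(\Delta))\leq h^0(\Omega^1_C\otimes\cL)-2=g+d-3$, which by Riemann--Roch is exactly your $h^0(\cL^{-2}(\Delta))\leq s-d-2$), and essentially the same case split according to the sign of $\deg\cL^{-2}(\Delta)=s-2d$. Your cases (1) and (3) match the paper's argument; your swap of the roles of $A$ and $B$ for $g=0$, $d=2$ is a harmless patch for a degenerate situation that the paper excludes from the reduction in Theorem~\ref{prpRedKos} anyway (there $h^0(\Omega^1_C\otimes\cL)=1$ and $\mu_\pi$ is trivially an isomorphism).

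The one subcase you leave open is precisely the one where the paper's argument is not merely terse but incorrect. For $d=1$, $s=4$ one needs $h^0(\cL^{-2}(\Delta))\leq 1$, equivalently $h^1(\cL^{-2}(\Delta))\leq g-2$, for a line bundle of degree $2$ on a curve of genus $g\geq 2$. The paper disposes of this by asserting that the only nonnegative-degree line bundles with $h^1\geq g-1$ are $\cO_C$ and effective bundles of degree $1$; but the hyperelliptic pencil $g^1_2$ is a degree-$2$ counterexample, with $h^0=2$ and $h^1=g-1$ by Riemann--Roch, and for $g=2$ it equals $\Omega^1_C$ and cannot be avoided by any genericity assumption on $C$. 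So your instinct is right: the $H^0$-lemma genuinely fails to apply when $\cL^{-2}(\Delta)$ is the hyperelliptic pencil, and closing the lemma in that configuration requires extra input --- either showing that Weierstrass data with constant $j$-invariant, $d=1$, $h^0(\cL)=0$ and four singular fibres cannot realize $\cL^{-2}(\Delta)\cong g^1_2$, or proving surjectivity of $\mu_\pi$ directly there (your base-point-free pencil trick on the $g^1_3$ given by $\Omega^1_C\otimes\cL$ when $g=2$ is the natural tool, but you have not carried it out). As written, neither your proposal nor the paper completes this subcase; everything else in your write-up is sound and coincides with the published argument.
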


\begin{proof}
In this proof we want to apply the  $H^0$--lemma \cite[Theorem 4.e.1]{GreenKos}. Applied to $\mu_\pi$ we find that if 
 $\Omega^1_C\otimes \cL$ is base point free and
\[ h^1 (\cL^{-2}(\Delta))\leq h^0(\Omega^1_C\otimes \cL)-2=g+d-3\]
holds then $\mu_\pi$ is surjective. The first condition holds because of our assumptions on $d$ and $\cL$. Hence we need to check the second condition.

Recall that $\deg(\cL^{-2}(\Delta))=s-2d$, and that in all our cases we assumed that at least $s\geq d+2$ holds.

Consider first the case with few singular fibers, i.e., suppose  $d+2\leq s \leq 2d-1$. Then $d\geq 3$ and  $\deg (\cL^{-2}\otimes(\Delta))<0$. Hence
\[ h^1 (\cL^{-2}\otimes(\Delta))= -\chi(\cL^{-2}(\Delta))=g-1-\deg(\cL^{-2}(\Delta))=g-1-s+2d\leq g-3+d\]
where the last inequality follows from $s\leq d+2$. Hence we covered this case.

Consider now the case $s\geq 2d$. Then $\cL^{-2}(\Delta)$ is a line bundle of nonnegative degree and hence
\[ h^1(\cL^{-2}(\Delta))\leq g.\]
For  $d\geq 3$ this is at most $g+d-3$, and hence the multiplication map is surjective. So we are left with the cases $d=1,2$.

If $d=2$ then we need to show that $h^1(\cL^{-2}(\Delta))\geq g-1$. However, $\cL^{-2}(\Delta)$ has nonnegative degree by assumption. The only line bundle of nonnegative degree whose $h^1$ equals at least $g$ is the trivial bundle, hence for $d=2$ and $\cO_C(\Delta)\not \cong\cL^2$ we have $h^1(\cL^{-2}(\Delta))\leq g-1 =g+d-3$. This finishes the case $d=2$.

If $d=1$ then we need to show $h^1(\cL^{-2}(\Delta))\leq g-2$. Again $\cL^{-2}(\Delta)$ has nonnegative degree. Recall that $h^0(\cL)=0$ forces $g\geq 2$.
The only line bundles with $h^1\geq g-1$ are line bundles of degree $0$ (i.e, $s=2$) or effective line bundles of degree $1$ (i.e, $s=3$ and $h^0(\cL^{-2}\Delta)>0$). We excluded these cases.
\end{proof}

Recall that for $d\geq 1$ we have $s\geq d+1$. Hence cases not covered by the previous lemma have $s=d+1$ or $s=d+2$. We can use  the results of \cite{ButMult} to show that $\mu_\pi$ is surjective for some cases with $s=d+1$.

\begin{lemma} \label{lemKosVanB} Let $\pi: X \to C$ be an elliptic surface without multiple fibers with constant $j$-invariant with $d+1$ singular fibers.
Assume that $g(C)\geq 2$ and $\Cliff(C)\geq 2$ or $g(C)\geq 3$ and $4-d\leq \Cliff(C)\leq 1$. If $d=1$ then assume $h^0(\cL)=0$. If $d\leq 2$ then assume that one of $\Omega^1_C \otimes \cL, \Omega^1_C\otimes \cL^{-1}(\Delta)$ is very ample.

If $h^0(\cL^{-1}(\Delta))=0$ then   $\mu_\pi$ is surjective.
 \end{lemma}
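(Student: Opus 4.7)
The plan is to reformulate the surjectivity of $\mu_\pi$ as a Koszul cohomology vanishing and then apply the multiplication criteria of Butler \cite{ButMult}. Set $A=\Omega^1_C\otimes\cL$ and $B=\Omega^1_C\otimes\cL^{-1}(\Delta)$. Using $s=d+1$ one computes $\deg A=2g-2+d$ and $\deg B=2g-1$, so in particular $B$ sits exactly on the borderline between non-special and special line bundles. By Serre duality the hypothesis $h^0(\cL^{-1}(\Delta))=0$ translates into the vanishing $h^1(A(-\Delta))=0$, which is precisely the Butler-type non-speciality input we need. A direct degree computation gives $h^1(B)=h^0(\cL(-\Delta))=0$ and, under the standing assumptions, also $h^1(A)=0$; hence $h^0(A)=g+d-1$ and $h^0(B)=g$ by Riemann--Roch. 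Surjectivity of $\mu_\pi$ is equivalent to the vanishing of $K_{0,1}(C,A,B)$, viewed as the cokernel of the multiplication map.

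To obtain this vanishing I would invoke the results of \cite{ButMult}, which, in the form relevant here, guarantee surjectivity of $H^0(A)\otimes H^0(B)\to H^0(A\otimes B)$ once one of the factors is very ample, the appropriate $h^1$ vanishes, and the Clifford index of $C$ dominates a numerical gap determined by the degrees. In our setting $\deg B=2g-1$ leaves very little slack, which is why the hypothesis splits into a main regime $\Cliff(C)\geq 2$ and a compensating regime $4-d\leq\Cliff(C)\leq 1$ with $g\geq 3$: both calibrations make Butler's inequality go through, with the higher genus in the second regime making up for the smaller Clifford index.

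The argument would then split according to $d$. For $d\geq 3$ the degree of $A$ is at least $2g+1$, so $A$ is automatically very ample and Butler's theorem applies directly to the pair $(A,B)$ under either Clifford hypothesis. For $d\leq 2$ the very-ampleness hypothesis is genuine, and the argument is symmetric: I would use whichever of $A$, $B$ is given to be very ample as the positive factor in Butler's criterion, and then check that the Clifford hypothesis together with $h^0(\cL^{-1}(\Delta))=0$ meets Butler's numerical threshold for the opposite factor.

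The main obstacle will be matching Butler's precise numerical hypotheses to the present setup without slack. Because $B$ is borderline and the Clifford inequality is tight, one must be careful when converting $\Cliff(C)\geq c$ into an upper bound on $h^1$ of the auxiliary bundle $A\otimes B^{-1}=\cL^2(-\Delta)$ via Clifford's theorem, and verify that this upper bound remains compatible with $h^0(A)-2$ in the range where it is the very-ampleness of $B$ rather than $A$ that is assumed. Once that numerical accounting is carried out carefully in the two Clifford regimes, the vanishing of $K_{0,1}(C,A,B)$, and hence the surjectivity of $\mu_\pi$, follows.
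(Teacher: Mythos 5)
Your proposal follows essentially the same route as the paper: both arguments reduce surjectivity of $\mu_\pi$ to the multiplication results of \cite{ButMult}, use $h^0(\cL^{-1}(\Delta))=0$ (equivalently, via Serre duality, the non-speciality/base-point-freeness of $\Omega^1_C\otimes\cL^{-1}(\Delta)$) together with the very-ampleness hypotheses to meet Butler's standing assumptions, and then split into exactly the two regimes you describe — $\Cliff(C)\geq 2$, where both degrees satisfy $2g-2+d\geq 2g-1\geq 2g+1-\Cliff(C)$ and Theorem 1 applies, and $4-d\leq\Cliff(C)\leq 1$ with $g\geq 3$, where the sum of degrees $4g-3+d\geq 4g+1-\Cliff(C)$ lets Theorem 2 apply. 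The remaining "numerical accounting" you defer is precisely these two one-line inequalities, so the sketch is sound.
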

 
 \begin{proof}
Our assumptions on $\cL$ and $\cL^{-1}(\Delta)$ yield that both  $\Omega^1_C\otimes \cL$ and $\Omega_C^1\otimes \cL^{-1}(\Delta)$ are base point free. If $d\geq 3$ then the former line bundle is very ample. If $d\in \{1,2\}$ then at least one of the line bundles is very ample by assumption. In particular the image of $\mu_{\pi}$ separates points and tangents. This a requirement to apply the results of \cite{ButMult}.

If $\Cliff(C)\geq 2$ then
\[\deg(\Omega_C^1\otimes \cL)=2g-2+d\geq \deg(\Omega_C^1\otimes \cL^{-1}(\Delta))=2g-1\geq 2g+1-\Cliff(C)\]
holds. In this case it follows from \cite[Theorem 1]{ButMult}  that $\mu_\pi$ is  surjective.

If $\Cliff(C)\in \{0,1\}$ then $\Cliff(C)\geq 4-d$. 
In particular
\[ \deg(\Omega^1_C\otimes \cL^{-1}\otimes\Delta)+\deg(\Omega^1_C\otimes \cL) =4g-3+d \geq 4g+1-\Cliff(C)\]
holds. Hence if $g\geq 3$ then we can use \cite[Theorem 2]{ButMult} to conclude that $\mu_\pi$ is surjective.
\end{proof}

\begin{remark}
Since $\deg (\cL^{-1}(\Delta))=1$ the condition $h^0(\cL^{-1}(\Delta))=0$ implies $g\geq 2$. Hence we have to exclude genus 2 curves. Moreover the condition $\Cliff(C)\geq \min \{2, 4-d\}$ excludes curves with Clifford index $0$ (i.e., hyperelliptic curves) if $d\leq 3$ and curves with Clifford index 1 (trigonal curves and plane quintics) if $d\leq 2$. Hence for $g\geq 3, d\geq 4$ there are no  cases left open.
\end{remark}

We will now consider two cases not covered by the previous lemmata where $s=d+2$. In these cases $d\in \{1,2\}$.

\begin{lemma}  \label{lemKosVanC} Suppose $\pi: X\to C$ is an elliptic surface with $d=2$ and $\cL^2\cong \cO_C(\Delta)$. Suppose $\Cliff(C)\geq 1$. If $h^0(\cL^{-1}(\Delta))=0$ then $\mu_\pi$ is surjective.
\end{lemma}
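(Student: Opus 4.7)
My plan is to reduce $\mu_\pi$ to a self-multiplication map for a single very ample line bundle on $C$ and then apply Butler's Theorem~1 from \cite{ButMult}, exactly as in Lemma~\ref{lemKosVanB}. The hypothesis $\cL^2\cong \cO_C(\Delta)$ yields the key isomorphism $\cL^{-1}(\Delta)\cong \cL$, so the two factors of $\mu_\pi$ coincide; setting $L=\Omega^1_C\otimes \cL$, one has $(\Omega^1_C)^2(\Delta)\cong L^{\otimes 2}$, and the statement becomes surjectivity of
\[ \mu: H^0(L) \otimes H^0(L) \to H^0(L^{\otimes 2}). \]
Note $\deg L=2g$, and the hypothesis $h^0(\cL^{-1}(\Delta))=0$ translates into $h^0(\cL)=0$.

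The core step is to verify that $L$ is very ample. Base-point-freeness is Corollary~\ref{corBpf} since $d=2$, and $h^1(L)=h^0(\cL^{-1})=0$ for degree reasons, so Riemann-Roch gives $h^0(L)=g+1$. For any $p,q\in C$, Serre duality yields
\[ h^0(L(-p-q))= h^1(\cL^{-1}(p+q)). \]
The line bundle $\cL^{-1}(p+q)$ has degree $0$, and is trivial iff $\cL\cong \cO_C(p+q)$; this would force $h^0(\cL)\geq 1$, contradicting our assumption. Hence $h^0(\cL^{-1}(p+q))=0$, and Riemann-Roch gives $h^1(\cL^{-1}(p+q))=g-1=h^0(L)-2$. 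Thus $L$ separates points and tangents.

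I would then apply Butler's Theorem~1 with $L_1=L_2=L$. The degree condition $\deg L\geq 2g+1-\Cliff(C)$ reduces to $\Cliff(C)\geq 1$, which is given, and the separation-of-points-and-tangents hypothesis is provided by the very ampleness of $L$. Hence $\mu=\mu_\pi$ is surjective.

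The main obstacle is the very-ampleness verification; it is precisely the assumption $h^0(\cL^{-1}(\Delta))=0$ that rules out $\cL\cong \cO_C(p+q)$ for any special pair $(p,q)$ and keeps $h^0(L(-p-q))$ at the uniform value $g-1$. Without this uniformity, the dimension of the image of $\mu$ could drop at some $(p,q)$ and Butler's criterion would not directly apply; this is why the case excluded from Lemma~\ref{lemKosVanA} and not covered by Lemma~\ref{lemKosVanB} requires its own treatment.
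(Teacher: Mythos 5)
Your proof is correct and follows essentially the same route as the paper's: both reduce, via $\cL^{-1}(\Delta)\cong\cL$, to the self-multiplication of the single degree-$2g$ line bundle $\Omega^1_C\otimes\cL$, verify its very ampleness by using $h^0(\cL^{-1}(\Delta))=0$ to exclude the only possible failure, and then invoke a $\deg\geq 2g+1-\Cliff(C)$ surjectivity criterion. The sole cosmetic difference is the citation: the paper applies Green--Lazarsfeld's Theorem~1 (projective normality), whereas you apply Butler's Theorem~1 with $L_1=L_2$, which in this symmetric situation amounts to the same statement.
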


\begin{proof}
For degree reasons the line bundle $\Omega^1_C\otimes \cL^{-1}(\Delta)$ is base point free. If this line bundle is not very ample then there exist points $p,q\in C$ such that
\[ \Omega_C^1\otimes \cL^{-1}(\Delta)(-p-q)\cong \Omega_C^1\]
Hence $\cL^{-1}(\Delta)\cong \cO_C(p+q)$. This contradicts $H^0(\cL^{-1}(\Delta))=0$. Hence $\Omega_C^1\otimes \cL^{-1}(\Delta)$ is very ample.
Since $\Cliff(C)\geq 1$ we have that 
\[ \deg(\Omega_C^1\otimes \cL^{-1}(\Delta))=2g\geq 2g+1-\Cliff(C)\]
Hence we can apply \cite[Theorem 1]{GreenLaz} to conclude that the multiplication map is surjective.
\end{proof}
\begin{remark}
The conditions $d=2$ and $\cL^2\cong \cO_C(\Delta)$ imply $s=4$, i.e., $s=d+2$.
\end{remark}

\begin{lemma}  \label{lemKosVanD} Suppose $\pi: X\to C$ is an elliptic surface with $d=1, s=3$, $h^0(\cL)=0$ and $h^0(\cL^{-2}(\Delta))>0$. 
Suppose $\Cliff(C)\geq 2$. 
If $h^0(\cL^{-1}(\Delta))=0$   then $\mu_\pi$ is surjective.
\end{lemma}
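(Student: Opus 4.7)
The strategy is to mirror the proof of Lemma~\ref{lemKosVanC}: I will verify that $B:=\Omega^1_C\otimes \cL^{-1}(\Delta)$ is very ample and that $A:=\Omega^1_C\otimes \cL$ is base point free, and then invoke \cite[Theorem 1]{GreenLaz}.

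First I would record the numerical data. With $d=1$ and $s=3$ one has $\deg A=2g-1$ and $\deg B=2g$; Serre duality gives $h^1(A)=h^0(\cL^{-1})=0$ and $h^1(B)=h^0(\cL(-\Delta))=0$, so $h^0(A)=g$ and $h^0(B)=g+1$. The line bundle $A$ is base point free exactly as in Corollary~\ref{corBpf}: a base point of $A$ at $p$ would force $\cL\cong \cO_C(p)$ and contradict $h^0(\cL)=0$.

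Next I would establish very ampleness of $B$. As in the proof of Lemma~\ref{lemKosVanC}, a degree-$2g$ line bundle $L$ on $C$ fails to be very ample precisely when $L\cong \Omega^1_C(p+q)$ for some $p,q\in C$ (by Riemann--Roch and Serre duality applied to $L(-p-q)$). Applied to $B$ this would force $\cL^{-1}(\Delta)\cong \cO_C(p+q)$ and hence $h^0(\cL^{-1}(\Delta))\geq 1$, contradicting the hypothesis.

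Finally, the bound $\Cliff(C)\geq 2$ translates to $2g+1-\Cliff(C)\leq 2g-1$, so both $\deg A$ and $\deg B$ meet the Green--Lazarsfeld degree hypothesis, and \cite[Theorem 1]{GreenLaz} applies verbatim as in Lemma~\ref{lemKosVanC} to yield the surjectivity of $\mu_\pi$. The only real obstacle is the very ampleness check for $B$; the additional hypothesis $h^0(\cL^{-2}(\Delta))>0$ does not enter the argument and merely serves to distinguish this case from the setting of Lemma~\ref{lemKosVanA}(3).
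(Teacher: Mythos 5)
Your setup is correct and matches the paper's: the base point freeness of $\Omega^1_C\otimes \cL$ follows from $h^0(\cL)=0$ as you say, the very ampleness of $\Omega^1_C\otimes \cL^{-1}(\Delta)$ follows from $h^0(\cL^{-1}(\Delta))=0$ exactly as in Lemma~\ref{lemKosVanC}, and you are right that the hypothesis $h^0(\cL^{-2}(\Delta))>0$ plays no role beyond separating this case from Lemma~\ref{lemKosVanA}(3). The gap is in the final step: you invoke \cite[Theorem 1]{GreenLaz}, which is a normal generation statement for a \emph{single} very ample line bundle $L$, i.e.\ it controls $\Sym^2 H^0(L)\to H^0(L^2)$. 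That was the right tool in Lemma~\ref{lemKosVanC}, because there the hypothesis $\cL^2\cong \cO_C(\Delta)$ forces $\cL\cong \cL^{-1}(\Delta)$, so $\mu_\pi$ really is the multiplication map of one line bundle with itself. Here, by contrast, $A=\Omega^1_C\otimes\cL$ and $B=\Omega^1_C\otimes\cL^{-1}(\Delta)$ have degrees $2g-1$ and $2g$ respectively, so they are not isomorphic and the Green--Lazarsfeld theorem does not apply "verbatim" --- it says nothing about $H^0(A)\otimes H^0(B)\to H^0(A\otimes B)$ for $A\not\cong B$.

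The paper instead concludes with \cite[Theorem 1]{ButMult}, the criterion for surjectivity of multiplication maps of two \emph{different} line bundles, whose hypotheses are that the image separates points and tangents (guaranteed by the very ampleness of $B$ you established) and that the relevant degrees are at least $2g+1-\Cliff(C)$; your computation $\deg A=2g-1\geq 2g+1-\Cliff(C)$ under $\Cliff(C)\geq 2$ is precisely the verification needed there (and $\deg B=2g$ is even larger). So the numerics you checked are the right ones and the argument is salvageable by replacing the citation, but as written the key theorem you appeal to does not cover the situation.
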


\begin{proof}
As in the previous proof we have that $\Omega_C^1\otimes \cL^{-1}(\Delta)$ is very ample and $\Omega_C^1\otimes \cL$ is base point free.
Since $\Cliff(C)\geq 2$ we have that 
\[ \deg(\Omega_C^1\otimes \cL)=2g-1\geq 2g+1-\Cliff(C)\]
Hence we can apply \cite[Theorem 1]{ButMult} to conclude that $\mu_\pi$ is surjective.
\end{proof}

\begin{remark} If $h^0(\cL)>0$ then $h^0(\cL^{-2}(\Delta))>0$ implies $h^0(\cL^{-1}(\Delta))>0$. I.e., in order to have the second group to be zero one needs $h^0(\cL)=0$.
\end{remark}

Finally we proceed with two cases where the multiplication map cannot be surjective.

\begin{lemma} \label{lemKosNonVanA}Suppose $\pi:X\to C$ is an elliptic surface with $d+1$ singular fibers, such that $h^0(\cL^{-1}(\Delta))>0$. Then  $\mu_\pi$ is not surjective.
\end{lemma}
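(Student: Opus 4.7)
The plan is as follows. Because $s=d+1$, the line bundle $\cL^{-1}(\Delta)$ has degree $1$, so the hypothesis $h^0(\cL^{-1}(\Delta))>0$ forces $\cL^{-1}(\Delta)\cong \cO_C(p)$ for a unique point $p\in C$; equivalently $\cL\cong \cO_C(\Delta-p)$.

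The core observation is that $p$ is automatically a base point of $\Omega^1_C\otimes \cL^{-1}(\Delta)\cong \Omega^1_C(p)$. From the short exact sequence
\[ 0\to \Omega^1_C\to \Omega^1_C(p)\to \C_p\to 0 \]
one reads off that the connecting map $\C_p\to H^1(\Omega^1_C)\cong \C$ is the residue pairing, which is an isomorphism; equivalently, the residue theorem on the compact Riemann surface $C$ prohibits any global meromorphic $1$-form with a single simple pole. Hence $H^0(\Omega^1_C\otimes\cL^{-1}(\Delta))=H^0(\Omega^1_C)$ and every section of $\Omega^1_C\otimes \cL^{-1}(\Delta)$ factors through its subsheaf $\Omega^1_C\otimes \cL^{-1}(\Delta-p)\cong \Omega^1_C$.

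Tensoring with $\Omega^1_C\otimes \cL$ then shows that the image of $\mu_\pi$ is contained in
\[ H^0(\Omega^1_C\otimes\cL)\cdot H^0(\Omega^1_C)\subseteq H^0\bigl((\Omega^1_C)^2(\Delta-p)\bigr)\subseteq H^0\bigl((\Omega^1_C)^2(\Delta)\bigr). \]
To conclude that $\mu_\pi$ is not surjective it suffices to check that the final inclusion is strict, i.e.\ that $p$ is not a base point of $(\Omega^1_C)^2(\Delta)$. This bundle has degree $4g-3+d$, and the theorem's hypotheses guarantee this is at least $2g$ in each case: for $g=0$ we have $d\geq 3$, so degree $\geq 0$; for $g=1$ the assumption $h^0(\cL)=0$ when $d=1$ is vacuous on an elliptic curve and thus forces $d\geq 2$, so degree $\geq 3$; for $g\geq 2$ already $4g-3+1\geq 2g+2$. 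Since any line bundle of degree $\geq 2g$ on a smooth projective curve is base point free, the strict inclusion follows and $\mu_\pi$ is not surjective.

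The only conceptual input is the residue-theorem observation that pins down a forced base point; the main obstacle, if one exists, is simply checking the three low-genus cases so that the target line bundle really has no base points, but this is routine given the hypotheses.
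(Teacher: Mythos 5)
Your proof is correct and follows essentially the same route as the paper: identify the unique point $p$ with $\cL^{-1}(\Delta)\cong\cO_C(p)$, observe that $p$ is a forced base point of $\Omega^1_C\otimes\cL^{-1}(\Delta)$ and hence of the image of $\mu_\pi$, and check that $p$ is not a base point of the target $(\Omega^1_C)^2(\Delta)$. The only difference is cosmetic: you justify the forced base point via the residue theorem and verify base-point-freeness of the target by a degree bound (importing the standing hypotheses on $d$ and $g$), where the paper argues by comparing $h^0$ of the target with $h^0$ of its twist by $-p$.
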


\begin{proof}
Our assumptions imply that there is a point $p$ such that $\cL^{-1}(\Delta)\cong \cO_C(p)$. This point $p$ is a base point of $\Omega^1_C\otimes \cL^{-1}(\Delta)$ and a base point of the image of $\mu_\pi$.
Hence the image of $\mu_\pi$ is contained in $H^0(\Omega^1_C(\Delta)(-p))$, which is of dimension $g-1+d$, whereas $h^0(\Omega^1_C(\Delta))=g+d$. Hence $\mu_\pi$ is not surjective. 
\end{proof}

In the case $d=2$, $s=4$ and $\cL^2\cong \cO_C(\Delta)$ we use the following lemma to construct a counterexample:
\begin{lemma} 
\label{lemKosNonVanB} Let $C$ be an elliptic curve. Let $\pi: X \to C$ be an elliptic surfaces, with $d=2$. Assume $\cL^{2}\cong \cO_C(\Delta)$. Then $\mu_\pi$
is not surjective.
\end{lemma}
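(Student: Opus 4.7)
The plan is to exploit the fact that on an elliptic curve the canonical bundle is trivial and that the identification $\cL^2 \cong \cO_C(\Delta)$ collapses the two factors of $\mu_\pi$ to copies of the same vector space, forcing the image to lie in the symmetric square.

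More concretely, first I would use $g(C)=1$, so $\Omega^1_C \cong \cO_C$, to rewrite the multiplication map as
\[
\mu_\pi : H^0(C,\cL) \otimes H^0(C,\cL^{-1}(\Delta)) \longrightarrow H^0(C,\cO_C(\Delta)).
\]
Then I would fix an isomorphism $\phi : \cL^2 \xrightarrow{\sim} \cO_C(\Delta)$, which induces isomorphisms $\cL \xrightarrow{\sim} \cL^{-1}(\Delta)$ (by tensoring with $\cL^{-1}$) and $\cO_C(\Delta) \xrightarrow{\sim} \cL^2$. Under these identifications $\mu_\pi$ becomes the standard multiplication map
\[
m : H^0(C,\cL) \otimes H^0(C,\cL) \longrightarrow H^0(C,\cL^2).
\]

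Next I would compute dimensions. Since $\cL$ has degree $d=2$ and $\cL^2$ has degree $4$, both positive on an elliptic curve, Riemann--Roch gives $h^0(\cL)=2$ and $h^0(\cL^2)=4$, so the source has dimension $4$ and the target has dimension $4$. The key observation is that $m$ is commutative: for any $s,t \in H^0(\cL)$ we have $m(s\otimes t)=st=ts=m(t\otimes s)$. Hence $m$ factors through the symmetric square $\Sym^2 H^0(\cL)$, whose dimension is $\binom{3}{2}=3$.

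Therefore $\dim \Ima(\mu_\pi) \leq 3 < 4 = \dim H^0((\Omega^1_C)^2(\Delta))$, and $\mu_\pi$ is not surjective. There is essentially no obstacle here once the identifications are in place; the only subtlety is to verify that the isomorphism $\cL \cong \cL^{-1}(\Delta)$ really does turn $\mu_\pi$ into the symmetric multiplication map (as opposed to some twisted version that might have larger image), but this is immediate from the naturality of multiplication of sections of line bundles.
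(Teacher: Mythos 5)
Your proof is correct and is essentially the paper's own argument: use $\Omega^1_C\cong\cO_C$ and $\cL^2\cong\cO_C(\Delta)$ to identify $\mu_\pi$ with the multiplication $H^0(\cL)\otimes H^0(\cL)\to H^0(\cL^2)$, which factors through the $3$-dimensional $\Sym^2 H^0(\cL)$ while the target is $4$-dimensional. The only difference is that you spell out the identifications more explicitly than the paper does.
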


\begin{proof} In this case $\Omega^1_C\cong \cO_C$. Now $h^0(\cL)=2$ and $h^0(\cL^2)=4$ by Riemann-Roch. The multiplication map $\mu_\pi$ factors over $\Sym^2 H^0(\cL)$ which is three-dimensional hence the map is not surjective.
\end{proof}

\begin{theorem}\label{mainThmCst} Let $\pi: X \to C$ be an elliptic surface with constant $j$-invariant. Let $d=\deg(\cL)$ and $s$ the number of singular fibers. Assume that $d\geq 2$ or $d=1$ and $h^0(\cL)=0$.

If one of the following holds 
\begin{enumerate}
\item $g=0$ and $d=2$;
\item $s \geq d+3$;
\item $s=d+2$ and $d\geq 3$.
\item $s=d+1$; $h^0(\cL^{-1}(\Delta))=0$; $g\geq 3$ and $\Cliff(C)\geq \min\{4-d,2\}$. If $d\in \{1,2\}$ then one of $\Omega_C^1\otimes \cL$, $\Omega_C^1\otimes \cL^{-1}(
\Delta)$ is very ample.
\item $d\in \{1,2\}$; $s=d+2$; $h^0(\cL^{-2}(\Delta))=0$.
\item $d\in \{1,2\}$; $s=d+2$; $h^0(\cL^{-2}(\Delta))\neq 0$; $h^0(\cL^{-1}(\Delta))= 0$; $\Cliff(C)\geq 3-d$.
\end{enumerate}
then $X$ satisfies infinitesimal Torelli.
\end{theorem}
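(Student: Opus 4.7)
The plan is to combine Theorem~\ref{prpRedKos} with the vanishing/nonvanishing lemmata above: Theorem~\ref{prpRedKos} reduces infinitesimal Torelli to the surjectivity of $\mu_\pi$, and each of the cases (2)--(6) falls under one of Lemmas~\ref{lemKosVanA}--\ref{lemKosVanD}. Case (1), where $g=0$ and $d=2$, is an elliptic K3 surface and therefore satisfies infinitesimal Torelli by the classical K3 result; this case is separate because it is excluded from the hypotheses of Theorem~\ref{prpRedKos}.

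For the remaining cases I would first verify that the hypothesis of Theorem~\ref{prpRedKos} is met, namely one of $d\geq 3$, $d=2$ with $g>0$, or $d=1$ with $h^0(\cL)=0$. The assumption $d\geq 2$ (or $d=1$, $h^0(\cL)=0$) from the theorem gives us everything except the requirement $g>0$ in the case $d=2$. Since we have already split off $g=0,d=2$ into case (1), in all remaining cases the hypotheses of Theorem~\ref{prpRedKos} hold, so $X$ satisfies infinitesimal Torelli if and only if $\mu_\pi$ is surjective.

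Next I would run through the cases matching them to the lemmata. Case (2), $s\geq d+3$, splits into $d\geq 3$ and $d\in\{1,2\}$, both covered by Lemma~\ref{lemKosVanA}(1) and (2) respectively. Case (3), $s=d+2$ with $d\geq 3$, is Lemma~\ref{lemKosVanA}(1). Case (5), $d\in\{1,2\}$, $s=d+2$, $h^0(\cL^{-2}(\Delta))=0$, is Lemma~\ref{lemKosVanA}(3). Case (4), $s=d+1$ with the indicated Clifford index and very-ampleness assumptions and $h^0(\cL^{-1}(\Delta))=0$, is exactly Lemma~\ref{lemKosVanB}. Finally, case (6) is $d\in\{1,2\}$, $s=d+2$, $h^0(\cL^{-2}(\Delta))\neq 0$, $h^0(\cL^{-1}(\Delta))=0$ and $\Cliff(C)\geq 3-d$; here I would note that $\deg(\cL^{-2}(\Delta))=s-2d=2-d$, so for $d=2$ the condition $h^0(\cL^{-2}(\Delta))\neq 0$ forces $\cL^2\cong \cO_C(\Delta)$, which is the setting of Lemma~\ref{lemKosVanC} (and $\Cliff(C)\geq 1=3-2$ is precisely its hypothesis), while for $d=1$ this is exactly the setting of Lemma~\ref{lemKosVanD} (with $\Cliff(C)\geq 2=3-1$).

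The main obstacle is not a single deep step but rather the bookkeeping: one needs to check that the Clifford-index and very-ampleness hypotheses of Lemmas~\ref{lemKosVanB}, \ref{lemKosVanC}, \ref{lemKosVanD} are indeed implied by the hypotheses stated in cases (4) and (6), and that the division of the range $s\geq d+1$ into the subcases (2)--(6) is exhaustive under the standing assumption $d\geq 2$ or ($d=1$, $h^0(\cL)=0$). Once these compatibilities are verified, the conclusion is immediate from Theorem~\ref{prpRedKos} together with the respective lemma, and so the proof should be short.
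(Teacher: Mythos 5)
Your proposal is correct and follows essentially the same route as the paper: dispose of $(g,d)=(0,2)$ via the K3 result, reduce everything else to the surjectivity of $\mu_\pi$ via Theorem~\ref{prpRedKos}, and match cases (2)--(6) to Lemmas~\ref{lemKosVanA}--\ref{lemKosVanD} exactly as you describe (including the observation that case (6) with $d=2$ forces $\cL^2\cong\cO_C(\Delta)$ and hence lands in Lemma~\ref{lemKosVanC}). The only cosmetic point is that the K3 argument should be understood as covering \emph{every} occurrence of $(g,d)=(0,2)$, not only list item (1), so that the hypothesis $g>0$ for $d=2$ in Theorem~\ref{prpRedKos} is available in all remaining cases --- which is what your argument effectively does.
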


\begin{proof}
If $(g,d)=(0,2)$ then $X$ is a K3 surface and therefore satisfies infinitesimal Torelli. For all other case note that by Proposition~\ref{prpRedKos} it  suffices to check that  
$\mu_\pi$ is surjective. The second and third case follow  Lemma~\ref{lemKosVanA}, the other three cases from  Lemma~\ref{lemKosVanB}, ~\ref{lemKosVanC}, ~\ref{lemKosVanD}.
\end{proof}

\begin{remark}
In the case $(g,d)=(0,2)$ several steps in our proof do not hold anymore. E.g., the  sequence (\ref{shrtExa}) in the proof of  Proposition~\ref{prpRedKos} is not exact for several values of $q$ and therefore the long exact sequence in Koszul cohomology does not exist.
\end{remark}

We have also some counter examples to infinitesimal Torelli:
\begin{theorem}\label{ThmCounter} Let $\pi: X \to C$ be an elliptic surface with constant $j$-invariant.  Assume that $d\geq 2$ or $d=1$ and $h^0(\cL)=0$. If $d=2$ assume that $g(C)>0$.

\begin{enumerate}
\item If $s=d+1$ and $h^0(\cL^{-1}(\Delta))>0$ or
\item if $d=2$, $g=1$ and $\cO_C(\Delta)\cong\cL^2$ 
\end{enumerate}
then $X$ does satisfy infinitesimal Torelli.
\end{theorem}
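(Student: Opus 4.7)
The statement is labeled \texttt{ThmCounter} in the counterexamples section and appears immediately after Lemmata~\ref{lemKosNonVanA} and~\ref{lemKosNonVanB}, which exhibit the failure of surjectivity of $\mu_\pi$; combined with the equivalence of Theorem~\ref{prpRedKos}, those lemmata show that $X$ does \emph{not} satisfy infinitesimal Torelli in the two situations listed. I will outline the proof of this assertion, which is evidently what the paper is after (the word ``not'' seems to have been dropped from the displayed conclusion).

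First I would verify that the standing numerical hypotheses of the theorem fit into the three allowed cases of Theorem~\ref{prpRedKos}, namely $d \geq 3$; $d = 2$ with $g > 0$; and $d = 1$ with $h^0(\cL) = 0$. The assumption ``$d \geq 2$ or ($d = 1$ and $h^0(\cL) = 0$), together with $g > 0$ if $d = 2$'' is exactly the disjunction of these three cases. Consequently Theorem~\ref{prpRedKos} applies and the infinitesimal Torelli property for $X$ is equivalent to surjectivity of
\[
\mu_\pi: H^0(\Omega^1_C \otimes \cL^{-1}(\Delta)) \otimes H^0(\Omega^1_C \otimes \cL) \to H^0((\Omega^1_C)^2(\Delta)).
\]

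In case (1) the hypotheses $s = d + 1$ and $h^0(\cL^{-1}(\Delta)) > 0$ are precisely those of Lemma~\ref{lemKosNonVanA}, which yields nonsurjectivity of $\mu_\pi$ directly. The geometric content is that $\deg(\cL^{-1}(\Delta)) = s - d = 1$, so having a nonzero section forces $\cL^{-1}(\Delta) \cong \cO_C(p)$ for a single point $p \in C$; this $p$ is then a base point of the first factor and hence a common zero of every element in the image of $\mu_\pi$, confining that image to the proper subspace $H^0(\Omega^1_C(\Delta)(-p))$ of dimension $g + d - 1$ inside the target of dimension $g + d$.

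In case (2) the hypotheses $d = 2$, $g = 1$, and $\cO_C(\Delta) \cong \cL^2$ match exactly Lemma~\ref{lemKosNonVanB}. Here $\Omega^1_C \cong \cO_C$, so $\mu_\pi$ reduces to the symmetric multiplication $H^0(\cL) \otimes H^0(\cL) \to H^0(\cL^2)$, which factors through $\Sym^2 H^0(\cL)$ of dimension $3$, while Riemann--Roch gives $h^0(\cL^2) = 4$; surjectivity is therefore impossible. Combining the two cases with Theorem~\ref{prpRedKos} yields the failure of infinitesimal Torelli for $X$. The only genuine step is the case-by-case verification that the numerical hypotheses really do slot into the three allowed situations of \ref{prpRedKos}; after that the argument is a clean concatenation of the cited lemmata.
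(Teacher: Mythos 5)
Your proposal is correct and follows exactly the paper's own proof: check that the standing hypotheses place $X$ in one of the three cases of Theorem~\ref{prpRedKos}, then invoke Lemma~\ref{lemKosNonVanA} for case (1) and Lemma~\ref{lemKosNonVanB} for case (2) to conclude $\mu_\pi$ is not surjective, hence infinitesimal Torelli fails. You are also right that the word ``not'' has been dropped from the theorem's conclusion as printed; the intended statement, consistent with the surrounding text and the lemmata cited, is that $X$ does \emph{not} satisfy infinitesimal Torelli.
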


\begin{proof}
If $s=d+1$ and  $h^0(\cL^{-1}(\Delta))>0$ then $ \mu_\pi$ is not surjective by Lemma~\ref{lemKosNonVanA}.
If $d=2$, $g=1$  and $\cO_C(\Delta)\cong\cL^2$ then $\mu_\pi$ is not surjective by Lemma~\ref{lemKosNonVanB}.

Hence it follows from Proposition \ref{prpRedKos} that infinitesimal Torelli does not hold for $X$.
\end{proof}

The following Corollary recovers the main result of  \cite{Ext}:
 \begin{corollary} Suppose $g\leq 1$ and $d\geq 3$. Let $\pi:X \to C$ be an elliptic fibration with constant $j$-invariant then $X$ satisfies infinitesimal Torelli if and only if $s>d+1$.
 \end{corollary}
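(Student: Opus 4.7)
The plan is to prove both directions by assembling the results already established, with the key observation being that $h^0(\cL^{-1}(\Delta))>0$ automatically holds when $g \leq 1$ and $s = d+1$.

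For the forward implication, I would suppose $s > d+1$, i.e., $s \geq d+2$, and split into two subcases. If $s \geq d+3$, then case (2) of Theorem~\ref{mainThmCst} applies directly (note that the hypotheses $d\geq 3$ and $d\geq 2$ or $(d=1, h^0(\cL)=0)$ are both satisfied since $d\geq 3$). If $s = d+2$, then since $d\geq 3$ case (3) of Theorem~\ref{mainThmCst} applies. In either case, $X$ satisfies infinitesimal Torelli.

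For the converse, I would show the contrapositive: if $s = d+1$ then infinitesimal Torelli fails. The strategy is to invoke case (1) of Theorem~\ref{ThmCounter}, for which the only nontrivial hypothesis to check is $h^0(\cL^{-1}(\Delta)) > 0$. Since $\deg(\cL^{-1}(\Delta)) = s - d = 1$, Riemann--Roch on $C$ gives
\[ h^0(\cL^{-1}(\Delta)) \geq 1 + 1 - g = 2 - g \geq 1 \]
under the hypothesis $g \leq 1$. (Concretely: if $g=0$ the line bundle is $\cO_{\Ps^1}(1)$ with $h^0 = 2$; if $g=1$ then $\cL^{-1}(\Delta)$ has positive degree, so $h^1$ vanishes and $h^0 = 1$.) Hence Theorem~\ref{ThmCounter}(1) applies and infinitesimal Torelli fails.

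There is no real obstacle here; the corollary is purely a repackaging of Theorems~\ref{mainThmCst} and~\ref{ThmCounter} in the restricted range $g\leq 1$, $d\geq 3$, where the numerical invariant $h^0(\cL^{-1}(\Delta))$ is forced by Riemann--Roch. The only thing to be mildly careful about is confirming that when $g \leq 1$ and $d \geq 3$ the genericity assumptions appearing in the other cases of Theorem~\ref{mainThmCst} (which involve Clifford index hypotheses on $C$) are irrelevant, since only cases (2) and (3) are needed and these require no Clifford-index conditions.
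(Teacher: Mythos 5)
Your proof is correct and follows essentially the same route as the paper: the positive direction is exactly Theorem~\ref{mainThmCst} (the paper does not even bother to single out cases (2) and (3) as you do), and the negative direction is Theorem~\ref{ThmCounter}(1) combined with the observation that $\deg(\cL^{-1}(\Delta))=1$ forces $h^0(\cL^{-1}(\Delta))>0$ when $g\leq 1$. Your Riemann--Roch justification of that last point is a harmless elaboration of what the paper leaves implicit.
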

 \begin{proof}
 If $s>d+1$ then $X$ satisfies infinitesimal Torelli by Theorem~\ref{mainThmCst}. 
 
 If $s=d+1$ then $\cL^{-1}(\Delta)$ has degree 1. Since $g\leq 1$ we have that $h^0(\cL^{-1}(\Delta))>0$, hence by Theorem~\ref{ThmCounter} $X$ does not satisfy infinitesimal Torelli.
 \end{proof}
 
\begin{corollary} Let $\pi: X\to C$ be an elliptic surface with constant $j$-invariant.
Suppose that one of 
\begin{enumerate}
\item $d=3$ and $j\neq 0, 1728$;
\item $d\in \{4,5\}$ and $j\neq 0$;
\item  $d\geq 6$
\end{enumerate}
holds. Then  $X$ satisfies infinitesimal Torelli.
\end{corollary}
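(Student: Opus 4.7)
The plan is to reduce every case to Theorem~\ref{mainThmCst} by establishing the lower bound $s\geq d+2$ on the number of singular fibers.

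First I would invoke the classification of Kodaira fiber types available for a constant $j$-invariant elliptic fibration: only $I_0^*$ may occur when $j\neq 0,1728$; only $III, I_0^*, III^*$ when $j=1728$; and only $II, IV, I_0^*, IV^*, II^*$ when $j=0$. The contributions of these types to the total discriminant degree $12d$ are $6$; $3,6,9$; and $2,4,6,8,10$ respectively. Dividing $12d$ by the maximum local contribution in each situation gives
\[ s=2d \ \text{if } j\neq 0,1728, \quad s\geq \lceil 4d/3\rceil\ \text{if } j=1728, \quad s\geq \lceil 6d/5\rceil\ \text{if } j=0, \]
the last of which is precisely Proposition~\ref{propBasic}(6).

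Next I would check case by case that the hypotheses of the corollary force $s\geq d+2$. In case (1), $d=3$ with $j\neq 0,1728$ yields $s=2d=6=d+3$. In case (2), the assumption $j\neq 0$ leaves $j\neq 0,1728$ (giving $s=2d\geq d+3$) or $j=1728$; in the latter $\lceil 4d/3\rceil$ equals $6=d+2$ for $d=4$ and $7=d+2$ for $d=5$. In case (3), if $j\neq 0,1728$ then $s=2d\geq d+6$; if $j=1728$ then $\lceil 4d/3\rceil\geq d+2$ because $d/3\geq 2$; and if $j=0$ then $\lceil 6d/5\rceil\geq d+2$, which one verifies by hand for $6\leq d\leq 9$ and which follows from $6d/5\geq d+2$ for $d\geq 10$.

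Finally, with $d\geq 3$ and $s\geq d+2$ in every case, I would invoke Theorem~\ref{mainThmCst}: its clause (2) treats $s\geq d+3$ and clause (3) treats $s=d+2$ (using $d\geq 3$). This gives infinitesimal Torelli in all three situations. There is no genuine obstacle; the only delicate point is the $j=0$, $d\in\{6,7,8,9\}$ range, where the linear estimate $6d/5$ narrowly misses $d+2$ and one must use the ceiling, but this is an arithmetic check rather than a conceptual difficulty.
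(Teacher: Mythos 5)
Your proof is correct and follows essentially the same route as the paper: both arguments derive the lower bounds $s=2d$, $s\geq\lceil 4d/3\rceil$, $s\geq\lceil 6d/5\rceil$ from the possible Kodaira fiber types at constant $j$-invariant, verify $s\geq d+2$ in each case, and then conclude via Theorem~\ref{mainThmCst}. Your version merely spells out the fiber-type bookkeeping and the ceiling arithmetic that the paper leaves implicit.
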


\begin{proof}
If the $j$-invariant is different from 0, 1728 then $\pi$ has $2d$ singular fibers. For $d\geq 3$ this is at least $d+2$.

If the $j$-invariant is 1728 then $\pi$ has at least $\lceil \frac{4}{3}d \rceil$ singular fibers. This is at least $d+2$ for $d\geq 4$.

If the $j$-invariant is 0 then $\pi$ has at least $\lceil \frac{6}{5}d \rceil$ singular fibers. This is at least $d+2$ for $d\geq 6$.
\end{proof}

We will finish by showing that for every $g$ there exists an example of an elliptic surface with $d=5$ not satisfying infinitesimal Torelli. For this we need to construct an elliptic surface with  6 singular fibers, constant $j$-invariant and $h^0(\cL^{-1}(\Delta))=1$.
\begin{example}
Let $C$ be a curve of genus $g$, such that $C$ admits a morphism $f:C\to \Ps^1$ of degree $6$, which a single point over $\infty$ and $6$ points over $0$.
This implies that there is a $f\in K(C)$ such that $div(f)=P_1+P_2+P_3+P_4+P_5+P_6-6Q$ for appropriate distinct points $P_1,\dots,P_6,Q\in C$.

Now let $\cL=\cO_C(5Q)$ and let $A=0\in H^0(\cL^4)$ and $B=f^5\in H^0(\cL^6)$. Then the elliptic surface associated with $y^2z=x^3+Axz^2+Bz^3$ in $\Ps(\cE)$ has 6 $II^*$ fibers, namely over $P_1,\dots,P_6$. 
Moreover, $\cO_C(\Delta)\otimes \cL^{-1}=\cO_C(Q)$. Hence $s=6,d=5$ and $h^0(\cL^{-1}(\Delta))>0$. 

Hence infinitesimal Torelli does not hold for $X$ by Theorem~\ref{ThmCounter}. Examples of such a curve $C$ exist for every $g\geq 0$.
\end{example}

\section{Elliptic fiber bundle case}\label{secFiber}
In \cite[Section 7]{Sai} Saito discusses the infinitesimal Torelli problem for elliptic surfaces such that $\cL\cong \cO$, the case of \emph{principal} elliptic fiber bundles. 
In this section we discuss the period map in the case of non-principal bundles, i.e., when $d=0$ and  $\cL\not \cong \cO_C$. Then $\cL$ is a torsion bundle of order 2,3,4 or 6.
In this case  the relative dualizing sheaf is a line bundle and  we have an isomorphism $\omega_{X/C}\cong \pi^* \cL$. 

To study infinitesimal Torelli in this case one can use both the strategy of Section~\ref{secCst} as well as the approach taken in \cite[Section 7]{Sai}. It turns out that the latter approach yields  a stronger result.

\begin{theorem} Let $\pi: X \to C$ be an elliptic fiber bundle and suppose  that $\cL\not \cong \cO_C$. Then $X$ satisfies infinitesimal Torelli if and only if  the multiplication map
\[ \mu_\pi: H^0(\Omega^1_C\otimes \cL) \otimes  H^0(\Omega^1_C\otimes \cL ^{-1}) \to H^0((\Omega^1_C)^2)\]
is surjective. 
\end{theorem}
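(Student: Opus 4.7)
The plan is to adapt Saito's direct Hodge theoretic approach from \cite[Section 7]{Sai}, which there is carried out in the principal case $\cL\cong \cO_C$. I would not route the argument through the Kii--Green criterion (Theorem~\ref{KiiThm}) because when $d=0$ the sheaf $\Omega^2_X=\pi^*(\Omega^1_C\otimes \cL)$ need not be base point free: if for instance $g\geq 2$ and $\cL$ happens to be equivalent to a difference of two points, then the criterion from Corollary~\ref{corBpf} suitably adapted shows $\Omega^1_C\otimes \cL$ has a base point.

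The structural inputs are the following. Since $\pi$ is a smooth morphism with $\omega_{X/C}\cong \pi^*\cL$, I have the relative cotangent and tangent sequences
\[ 0\to \pi^*\Omega^1_C\to \Omega^1_X\to \pi^*\cL\to 0, \qquad 0\to \pi^*\cL^{-1}\to T_X\to \pi^*T_C\to 0. \]
The derived direct images are $R^0\pi_*\cO_X=\cO_C$ and $R^1\pi_*\cO_X=\cL^{-1}$, so by the projection formula $R^1\pi_*\pi^*\cM=\cM\otimes \cL^{-1}$ for every line bundle $\cM$ on $C$, and the Leray spectral sequence for $\pi$ degenerates at $E_2$ since the base is a curve. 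Combined with the vanishings $H^0(\cL^{\pm 1})=0$ (consequences of $\cL$ being nontrivial torsion), every group appearing in the computation of $d\Phi$ decomposes cleanly as a sum of groups $H^j(C,\Omega^1_C\otimes \cL^i)$ and $H^j(C,T_C\otimes \cL^i)$ for $i\in\{-1,0,1\}$ and $j\in\{0,1\}$.

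I would then compute the differential $d\Phi\colon H^1(X,T_X)\to \Hom(H^{2,0}(X),H^{1,1}(X))$ of the period map summand by summand. The cup product with each component of the Kodaira--Spencer class translates, via Leray and the projection formula, into a multiplication of sections of line bundles on $C$. After dualizing using Serre duality on $X$, all but one of the resulting multiplication maps is either an isomorphism or injective for trivial reasons (typically because source or target vanishes, thanks to $H^0(\cL^{\pm 1})=0$), and the single remaining nontrivial condition is exactly the surjectivity of
\[ \mu_\pi\colon H^0(\Omega^1_C\otimes \cL)\otimes H^0(\Omega^1_C\otimes \cL^{-1})\longrightarrow H^0((\Omega^1_C)^2). \]
This produces the asserted equivalence.

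The main technical difficulty will be the bookkeeping of the several summands of $H^1(X,T_X)$, $H^{2,0}(X)$, and $H^{1,1}(X)$, together with the verification that the ``cross'' components of $d\Phi$ between different summands contribute no additional obstruction. The hypothesis $\cL\not\cong \cO_C$ is essential precisely here: the vanishings $H^0(\cL^{\pm 1})=0$ are exactly what eliminate the extra contributions that in the principal case force Saito to use a somewhat different argument in \cite[Section 7]{Sai}, and that account for the known failure of infinitesimal Torelli when $h^1(X)$ is odd and $\cL\cong \cO_C$.
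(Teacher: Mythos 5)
Your plan is essentially the paper's proof: it follows Saito's direct approach, using the Leray spectral sequence, the projection formula, and the vanishings $H^0(\cL^{\pm 1})=0$ to split the relevant direct-image sequences and reduce the differential of the period map to multiplication maps of line bundles on $C$, of which only $\mu_\pi$ survives as a genuine condition. The one caveat hiding in your ``bookkeeping'' is that when $\cL$ has order $2$ one of the other components, namely $H^0(\Omega^1_C\otimes\cL)\otimes H^1(\cL)\to H^1(\Omega^1_C\otimes\cL^{2})$, is not killed by a vanishing of source or target and must instead be identified with the Serre duality pairing to see that it is surjective.
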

\begin{proof}
In the fiber bundle case we have that the relative dualizing sheaf is isomorphic to the sheaf of relative differentials, i.e., $\Omega^1_{X/C}\cong \omega_{X/C}\cong \pi^*\cL$. 
In particular we have a short exact sequence
\[ 0 \to \pi^* \Omega^1_C\to \Omega^1_X\to  \pi^*\cL \to 0.\]

Similarly as in the case of constant $j$-invariant $d>0$ we find that the following pieces of the long exact sequence of higher direct images
\begin{equation}\label{sesA} 0 \to \Omega^1_C\to \pi_* \Omega^1_X\to \cL\to 0\end{equation}
and
\[ 0 \to R^1\pi_*\pi^*\Omega^1_C\to R^1\pi_*\Omega^1_X \to R^1\pi_* \cL \to 0\]
are exact. 
Using the projection formula we obtain isomorphisms
\[ R^1\pi_*\pi^* \Omega^1_C\cong \Omega^1_C\otimes \cL^{-1}
\mbox{ and } R^1\pi_* \cL\cong \cL\otimes \cL^{-1}\cong \cO_C.\]
Therefore the second exact sequence simplifies to 
\begin{equation}\label{sesB} 0 \to \Omega^1_C \otimes \cL^{-1} \to R^1\pi_*\Omega^1_X \to \cO_C \to 0\end{equation}

As argued in \cite{Sai}, we have that $X$ satisfies infinitesimal Torelli if and only if the cup product map
\[ \mu: H^0(\Omega^2_X)\otimes H^1(\Omega^1_X)\to H^1(\Omega^1_X\otimes \Omega^2_X)\]
is surjective.

Using the Leray spectral sequence we find that $X$ satisfies infinitesimal Torelli if and only if
\[ \mu_1:H^0(C,\pi_*\Omega^2_X)\otimes H^1(\pi_*\Omega^1_X)\to H^1(\pi_*(\Omega^1_X\otimes \Omega^2_X))\]
and
\[ \mu_2:H^0(C,\pi_*\Omega^2_X)\otimes H^0(C,R^1\pi_*\Omega^1_X)\to H^0(R^1\pi_*\Omega^1_X\otimes \Omega^2_X)\]
are surjective.

Recall that $\Omega^2_X=\pi^* \Omega^1_C\otimes \cL$. Using the projection formula we obtain $\pi_*(\Omega^1_X\otimes \Omega^2_X)=\Omega^1_C \otimes \cL \otimes \pi_*\Omega^1_X$.
Tensor (\ref{sesA}) with $\Omega^1_C\otimes \cL$ and consider the following piece of the long exact sequence in cohomology:
\[ H^1(\Omega^1_C \otimes (\Omega^1_C \otimes \cL))\to H^1( \pi_*(\Omega^1_C\otimes (\Omega^1_C \otimes \cL)))\to H^1(\Omega^1_C\otimes \cL^2).\]
We claim that the first group is zero. Since $\cL$ has finite order, but is nontrivial we have that $g$ is at least $1$. If $g=1$ then $\Omega^1_C \otimes \Omega^1_C \otimes \cL$ is a nontrivial line bundle of degree zero and hence its first cohomology vanishes. If $g>1$ then the degree of $\Omega^1_C \otimes \Omega^1_C \otimes \cL$ equals $4(g-1)>2(g-1)$ and the first cohomology vanishes for degree reasons.

If $\cL$ has order at least 3 then also $H^1(\Omega_C^1\otimes \cL^2)$ is zero and therefore $\mu_1$ is surjective. On the other hand if $\cL^2\cong \cO_C$ then  $H^1(\Omega^1_C\otimes \cL^2)$ is onedimensional and it suffices to check whether the cup product map
\[ H^0(\Omega^1_C\otimes \cL)\otimes H^1(\cL) \to H^1(\Omega_C^1\otimes \cL^2)\]
is nontrivial. However this map coincides with Serre duality in this case and hence  $\mu_1$ is surjective.

To show that $\mu_2$ is surjective we consider this exact sequence (\ref{sesB})
and the sequence tensored  with $ \pi_*\Omega_X^2=\Omega^1_C\otimes \cL$. From 
\[H^1( \Omega^1_C \otimes \cL^{-1})=0=H^1((\Omega^1_C)^2)\]
it follows that both exact sequences split on sections and we can decompose the map $\mu_2$ in
\[ \mu_2^1: H^0(\Omega^1_C\otimes \cL^{-1}) \otimes H^0(\Omega^1_C\otimes \cL)\to H^0((\Omega^1_C)^2)\]
and
\[ \mu_2^2:H^0( \cO_C) \otimes H^0(\Omega^1_C\otimes \cL)\to H^0((\Omega^1_C)\otimes \cL)\]
and obtain an exact sequence
\[ \ker(\mu_2^2)\to \coker(\mu_2^1)\to\coker(\mu)\to \coker(\mu_2^2)\to 0.\]
The  map $\mu_2^2$ is obviously an isomorphism and $\mu_2^1$ is just  $\mu_\pi$. In particular, we obtain that $\coker(\mu_2)\cong \coker(\mu_\pi)$. Hence $\mu$ is surjective if and only if $\mu_\pi$ is surjective and therefore $X$ satisfies infinitesimal Torelli if and only if $\mu_\pi$ is surjective.
\end{proof}

\begin{remark}
If $\cL\cong \cO_C$ then Saito shows that if $h^1(X)$ is odd then $X$ does not satisfy infinitesimal Torelli whenever $g>1$, but does satisfy infinitesimal Torelli for $g=1$. If $h^1(X)$ is even $X$ then he shows that $X$ satisfies infinitesimal Torelli if $g=1$ or $g>1$ and $C$ is not hyerpelliptic.

In the case that $h^1$ is odd $\cL\cong \cO_C$ it turns out that the exact sequence (\ref{sesB}) does not split on sections. This turns  out to be an obstruction for the surjectivity of $\mu_2$ in this case and therefore for infinitesimal Torelli.
If $h^1$ is even then we can proceed as above, but one needs a small argument to show that  (\ref{sesB}) splits on sections, since $H^1( \Omega^1_C \otimes \cL^{-1})\neq 0$ in this case. This is precisely the approach by Saito.
\end{remark}

\begin{corollary}Let $\pi: X \to C$ be an elliptic fiber bundle and suppose $g=1$. If $\cL$ is nontrivial then $X$ does not satisfy infinitesimal Torelli.
\end{corollary}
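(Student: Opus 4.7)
The plan is to apply the theorem proved immediately above, which reduces the question of infinitesimal Torelli for a non-principal elliptic fiber bundle to the surjectivity of the multiplication map
\[ \mu_\pi: H^0(\Omega^1_C\otimes \cL) \otimes H^0(\Omega^1_C\otimes \cL^{-1}) \to H^0((\Omega^1_C)^2). \]
It therefore suffices to show that, under the assumptions $g=1$ and $\cL\not\cong \cO_C$, this map fails to be surjective.

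First I would specialise to the genus one situation. Since $g=1$ we have $\Omega^1_C\cong \cO_C$, so the multiplication map simplifies to
\[ \mu_\pi: H^0(\cL)\otimes H^0(\cL^{-1})\to H^0(\cO_C)\cong \C. \]
Next I would note that, by Proposition~\ref{propBasic}, the line bundle $\cL$ has degree $d=0$ since $\pi$ is a fiber bundle, and by hypothesis $\cL$ is nontrivial. Hence $\cL$ is a nontrivial degree zero line bundle on $C$, and the same is true for $\cL^{-1}$. A nontrivial line bundle of degree zero on any smooth projective curve has no global sections, so $h^0(\cL)=h^0(\cL^{-1})=0$.

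Consequently the domain of $\mu_\pi$ is zero while the codomain is one-dimensional, so $\mu_\pi$ is trivially not surjective. By the previous theorem, $X$ does not satisfy infinitesimal Torelli. There is no real obstacle here; the statement is essentially a direct consequence of the previous theorem combined with the vanishing of sections of nontrivial degree zero line bundles on a curve of genus one, and the only thing to verify is that the hypotheses of that theorem apply (namely that $\pi$ is a fiber bundle with $\cL\not\cong\cO_C$), which is exactly what the corollary assumes.
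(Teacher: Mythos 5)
Your proof is correct and follows essentially the same route as the paper: both reduce to the theorem on non-principal fiber bundles and observe that for $g=1$ the source of $\mu_\pi$ vanishes (since a nontrivial degree-zero line bundle on an elliptic curve has no sections) while the target $H^0((\Omega^1_C)^2)\cong H^0(\cO_C)$ is one-dimensional. The paper phrases the vanishing via $h^1$, which agrees with your $h^0$ computation by Riemann--Roch on a genus one curve.
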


\begin{proof}
If $\cL$ is nontrivial then
\[ H^1(\Omega^1_C\otimes \cL)=H^1(\cL)=0.\]
At the same time $h^1((\Omega^1)^2)=1$ hence $\mu_\pi$ is not surjective.
\end{proof}

\begin{corollary}Let $\pi: X \to C$ be an elliptic fiber bundle and suppose $g\geq 2$.
\begin{enumerate}
\item Suppose $h^1(X)$ is odd and $\cL\cong \cO_C$ then $X$ does not satisfy infinitesimal Torelli.
\item Suppose $h^1(X)$ is even or $\cL\not \cong \cO_C$. Then $X$ satisfies infinitesimal Torelli if and only if 
\[ \mu_\pi: H^0(\Omega^1_C\otimes \cL) \otimes  H^0(\Omega^1_C\otimes \cL ^{-1}) \to H^0((\Omega^1_C)^2)\]
is surjective. In particular if $\cL\cong \cO_C$ and $C$ is not hyperellitic then $X$ satisfies infinitesimal Torelli.
\end{enumerate}
\end{corollary}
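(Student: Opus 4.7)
The plan is to split the corollary into the three cases distinguished by the hypotheses, and in each case to reduce to a result already established in the paper or in Saito's work, rather than redo the Leray-spectral-sequence analysis from scratch. Throughout, $\mu_\pi$ refers to the multiplication map displayed in the statement, and the geometric input is that the relative dualizing sheaf of a fiber bundle is $\pi^*\cL$.

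First I would dispose of the case $\cL\not\cong \cO_C$: here part (2) is exactly the content of the previous theorem, so there is nothing to do beyond quoting it. This takes care of both $h^1(X)$ parities simultaneously, so the remaining work is confined to $\cL\cong \cO_C$, which is the principal fiber bundle case treated by Saito. For $\cL\cong \cO_C$ and $h^1(X)$ odd, I would simply cite the result in \cite[Section~8]{Sai} (recalled in the remark just preceding this corollary): the sequence (\ref{sesB}) fails to split on global sections, and this non-splitting obstructs the surjectivity of $\mu_2$ in the Leray decomposition, hence the failure of infinitesimal Torelli whenever $g>1$. This is the only case where we genuinely lose the equivalence with surjectivity of $\mu_\pi$.

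For $\cL\cong \cO_C$ and $h^1(X)$ even the argument of the previous theorem still applies with one modification: when $\cL$ is trivial, $H^1(\Omega^1_C\otimes \cL^{-1})=H^1(\Omega^1_C)$ no longer vanishes, so the splitting of (\ref{sesB}) on sections, which was automatic in the non-trivial case, has to be extracted separately. I would import Saito's verification of this splitting in the even case and then follow verbatim the decomposition $\mu_2=\mu_2^1\oplus \mu_2^2$ used in the preceding theorem, so that again $\coker(\mu)\cong \coker(\mu_\pi)$ and the equivalence with surjectivity of $\mu_\pi$ follows. The work here is essentially bookkeeping; the substantive input is Saito's splitting lemma.

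Finally, for the ``in particular'' clause, I would specialize $\mu_\pi$ to $\cL\cong \cO_C$, where it becomes the canonical multiplication map
\[ H^0(\Omega^1_C)\otimes H^0(\Omega^1_C)\to H^0((\Omega^1_C)^{\otimes 2}),\]
and invoke the classical theorem of Max Noether, which asserts that this map is surjective precisely when $C$ is non-hyperelliptic. The main obstacle I foresee is a presentational one rather than a mathematical one: one must be careful to record, in the odd-$h^1(X)$ case, that $\cL\cong\cO_C$ really does force failure of Torelli even though $\mu_\pi$ is itself surjective for non-hyperelliptic $C$; that is, the equivalence between infinitesimal Torelli and surjectivity of $\mu_\pi$ is genuinely broken there, and this asymmetry in the statement comes entirely from the non-splitting of (\ref{sesB}).
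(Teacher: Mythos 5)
Your proposal is correct and assembles exactly the ingredients the paper intends: the corollary is stated without proof there, following from the preceding theorem (for $\cL\not\cong\cO_C$), the remark on Saito's splitting/non-splitting of (\ref{sesB}) on sections (for $\cL\cong\cO_C$, split by parity of $h^1(X)$), and Max Noether's theorem for the final clause. No gaps.
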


\section{$\Omega^2_X$ not base point free}\label{secConj}
In this section we will focus on the case $\deg(\cL)=1$ and $h^0(\cL)>0$.

If $g=0$ then we know that infinitesimal Torelli does not hold, since $X$ is a rational surface. So we assume now that $g>0$. In particular  $\cL\cong \cO_C(p)$, for some  unique point $p\in C$.

In \cite{Sai} Saito considers at two occasions a multiplication map
\[ H^0(\Omega^2_C\otimes \cL)\otimes H^0(\cT) \to H^0(\cT)\]
for some torsion sheaf $\cT$. Saito reduces infinitesimal Torelli to the surjectivety of this map. This map is surjective if and only if the base locus of $\Omega^1_C\otimes \cL$ and the support of $\cT$ are disjoint. The latter happens if and only if $p$ is not in support in $\cT$.

However, the construction of $\cT$ is not sufficiently explicit to enable us to check this latter criterion.

If the $j$-invariant is constant there is further evidence. In this case $X$ is of product-quotient type, i.e., it is the quotient of a product $E\times \tilde{C}$, where $E$ is an elliptic curve, by a finite cyclic group $G$. However,  if $G$ has order at least 3 then one can invert the $G$-action on one of the factor and leave it invariant on the other, in order to obtain  some sort of dual surface, $\tilde{X}$. On easily checks that this duality interchanges the line bundles $\cL$ and $\cL^{-1}(\Delta)$, hence the multiplication map $\mu_{\pi}$ is the same map for both morphisms. If $d=1$ and $h^0(\cL)>0$ then the dual surface satisfies $s=d+1$ and $h^0(\cL^{-1}(\Delta))>0$, hence the dual surface does not satisfy infinitesimal Torelli.

In particular,  the map $\mu_\pi$ is not surjective. However in this case this is insufficient to determine the failure of infinitesimal Torelli.

\begin{conjecture}Let $\pi:X\to C$ be an elliptic surface with $d=1$ and $h^0(\cL)>0$. Then $X$ does not satisfy infinitesimal Torelli.
\end{conjecture}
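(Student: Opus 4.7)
The strategy is to show that the base locus of $\Omega^2_X$ produces obstructions to infinitesimal Torelli not visible through the Kii--Green criterion. The case $g=0$ gives a rational elliptic surface, for which Torelli trivially fails, so we may assume $g\geq 1$ and hence $\cL\cong\cO_C(p)$ for a unique $p\in C$; the base locus of $\Omega^2_X=\pi^*(\Omega^1_C\otimes\cL)$ is the fibre $F=\pi^{-1}(p)$. The aim is to show the differential of the period map has nonzero kernel, equivalently that the cup-product
\[ \mu\colon H^0(\Omega^2_X)\otimes H^1(\Omega^1_X)\to H^1(\Omega^1_X\otimes\Omega^2_X) \]
is not surjective. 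Every section of $\Omega^2_X$ vanishes along $F$, so $\Ima\mu\subseteq H^1(\Omega^1_X\otimes\Omega^2_X(-F))$, and the task reduces to showing this containment is strict.

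For constant $j$-invariant, the dual-surface construction recalled earlier in this section already gives non-surjectivity of $\mu_\pi$: on the dual surface $\tilde X$, the invariants satisfy $\tilde s=\tilde d+1$ and $h^0(\tilde\cL^{-1}(\tilde\Delta))=h^0(\cL)>0$, so Theorem~\ref{ThmCounter}(1) applies. This is not yet enough, since Proposition~\ref{prpRedKos} requires $\Omega^2_X$ base point free. The plan is to refine that proposition in the presence of a base point: use the exact sequence
\[ 0\to\Omega^1_C\otimes\cL(-p)\to\Omega^1_C\otimes\cL\to\C_p\to 0 \]
to split off the base-point contribution, and then run a snake-lemma argument relating $\coker\mu$ to $\coker\mu_\pi$ with controllable kernel. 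Non-surjectivity of $\mu_\pi$ should propagate to non-surjectivity of $\mu$, yielding the failure of infinitesimal Torelli on $X$.

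The main obstacle is the nonconstant $j$-invariant case, which contains Ikeda's counterexample. Here no dual surface is available and one has $\pi_*\Omega^1_X=\Omega^1_C$, so the naive pushdown of the computation to $C$, as in the proof of Theorem~\ref{mainThm}, yields a Koszul group that already vanishes by Lemma~\ref{lemVan}; the obstruction must therefore come from higher pieces which Kii--Green normally suppresses via base-point freeness. The plan is to write down the two-step spectral sequence computing $\mu$ on $X$, isolate the extra piece supported at $F$ coming from $R^1\pi_*\Omega^1_X$, and produce inside it a nontrivial class arising from a first-order deformation of the Weierstrass data localised at $p$. Verifying non-triviality of this class, and extracting a uniform argument across all $g\geq 1$ that specialises to Ikeda's construction when $g=1$, is the principal difficulty: the section $\pi^*s\in H^0(\pi^*\cL)$ cutting out $F$ is a genuine divisor of zeros, so a purely Koszul-theoretic argument on $C$ cannot see it, and this is precisely why the assertion is stated only as a conjecture.
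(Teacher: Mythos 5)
The statement you are trying to prove is stated in the paper as a \emph{conjecture}: the paper offers no proof, only heuristic evidence (precisely the dual-surface argument you cite, showing that for constant $j$-invariant the map $\mu_\pi$ fails to be surjective, followed by the explicit caveat that ``in this case this is insufficient to determine the failure of infinitesimal Torelli''). Your proposal, read honestly, is a research plan rather than a proof, and you say as much in your last sentence; so there is no complete argument here to certify. The two places where the plan genuinely stops short are the following. First, in the constant-$j$ case, the implication ``$\mu_\pi$ not surjective $\Rightarrow$ $\mu$ not surjective'' is exactly the step that Proposition~\ref{prpRedKos} supplies \emph{only under base point freeness of $\Omega^2_X$}, which is precisely the hypothesis that fails when $d=1$ and $h^0(\cL)>0$. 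Your proposed remedy --- tensoring with $0\to\Omega^1_C\otimes\cL(-p)\to\Omega^1_C\otimes\cL\to\C_p\to 0$ and chasing a snake-lemma diagram --- is plausible in outline, but the whole content of the problem is hidden in the torsion contribution at $p$: this is the same torsion sheaf $\cT$ that the paper says Saito's construction does not make explicit enough to analyse, so ``controllable kernel'' is an assertion, not an argument. Second, the nonconstant-$j$ case (which contains Ikeda's counterexample) is left entirely open: producing a nontrivial class in the piece supported on $F$ coming from $R^1\pi_*\Omega^1_X$ and verifying its nontriviality is the entire difficulty, and nothing in the proposal constrains it.

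One smaller imprecision worth fixing if you pursue this: the inclusion $\Ima\mu\subseteq H^1(\Omega^1_X\otimes\Omega^2_X(-F))$ is not literally meaningful, since $\Ima\mu$ lives in $H^1(\Omega^1_X\otimes\Omega^2_X)$. What is true is that $\Ima\mu$ is contained in the image of the natural map $H^1(\Omega^1_X\otimes\Omega^2_X(-F))\to H^1(\Omega^1_X\otimes\Omega^2_X)$, and that map on $H^1$ can perfectly well be surjective even though every section of $\Omega^2_X$ vanishes on $F$; so vanishing along the base locus does not by itself produce an obstruction, and the reduction ``to showing this containment is strict'' is where the real work would begin.
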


\bibliographystyle{plain}
\bibliography{remke2}

\end{document}